\newtheorem{lem}{Lemma}
\newtheorem{thm}[lem]{Theorem}
\newtheorem{prop}[lem]{Proposition}
\numberwithin{equation}{section}
\theoremstyle{definition}
\theoremstyle{remark}
\DeclareMathOperator{\Li}{Li}
\DeclarePairedDelimiterX{\setm}[2]{\{}{\}}{#1\,\delimsize\vert\,\mathopen{}#2}
\let\abs\relax
\DeclarePairedDelimiter\abs{\lvert}{\rvert}%
\newcommand{\Z}{\mathbb{Z}}
\newcommand{\R}{\mathbb{R}}
\newcommand{\C}{\mathbb{C}}
\newcommand{\Oh}{\mathcal{O}}
\title[A Local Limit Theorem for Integer Partitions into Small Powers]{A Local Limit Theorem for\\ Integer Partitions into Small Powers}
\author[G. F. Lipnik]{Gabriel F. Lipnik}
\address[G. F. Lipnik]{
  Graz University of Technology, Institute of Analysis and Number
  Theory, 8010~Graz, Austria}
\email{math@gabriellipnik.at}
\author[M. G. Madritsch]{Manfred G. Madritsch}
\address[M. G. Madritsch]{
  Université de Lorraine, IECL, CNRS, F-54000 Nancy, France}
\email{manfred.madritsch@univ-lorraine.fr}
\author[R. F. Tichy]{Robert F. Tichy}
\address[R. F. Tichy]{
  Graz University of Technology, Institute of Analysis and Number
  Theory, 8010~Graz, Austria}
\email{tichy@tugraz.at}
\subjclass[2020]{11P82, 05A17, 60F05}
\keywords{integer partitions, partition function, local limit theorem, saddle point method, Mellin transform}
\date{\today}
\begin{document}

\begin{abstract}
  The investigation of partitions of integers plays an important role in
  combinatorics and number theory. Among the many variations, partitions into
  powers $0<\alpha<1$ were of recent interest. In the present paper we want to
  extend our considerations of the length of a random partition by providing a
  local limit theorem.
\end{abstract}

\maketitle

\section{Introduction}

A partition of a positive integer $n$ is the representation of this integer as
sum of positive integers
\[
  n=a_1+\cdots+a_m
\]
with $1\leq a_1\leq \cdots\leq a_m$. We denote by $p(n)$ the number of such
partitions of $n$ of arbitrary length $m$. The study of this function has a long
history in combinatorics and number theory (see Andrews
\cite{andrews1976:theory_partitions} and the references therein). Among the
first establishing an asymptotic formula for $p(n)$ are Hardy and
Ramanujan~\cite{hardy_ramanujan1918:asymptotic_formulae_in}. This formula was
extended to a complete asymptotic expansion by Rademacher
\cite{rademacher1937:partition_function_p}. Their approach uses properties of
elliptic modular functions.
Ingham~\cite{ingham1941:tauberian_theorem_partitions} developed a more
elementary approach (comparable to our method) for the asymptotic analysis of
certain partition problems.

A canonical generalization is to consider partitions in powers of integers. In
particular, we consider representations of the form
\[
  n=\lfloor a_1^\alpha\rfloor+\cdots+\lfloor a_m^\alpha\rfloor,
\]
where $\alpha>0$ and $1\leq a_1\leq \cdots\leq a_m$. For the case $\alpha\in\Z$ Roth and Szekeres \cite{roth_szekeres1954:some_asymptotic_formulae} could
provide an asymptotic formula. Gafni \cite{gafni_2016:power_partitions} used the
cirle method to establish a similar result and in recent papers Tenenbaum, Wu
and Li \cite{tenenbaum_wu_li2019:power_partitions_and} as well as Debruyne and
Tenenbaum \cite{debruyne_tenenbaum2020:saddle_point_method_for_partitions} used
the saddle point method to establish a complete asymptotic expansion for the
case of $\alpha\in\Z$ and real $\alpha\geq1$, respectively.

The case of partitions in square roots ($\alpha=\frac12$) is of recent interest.
For a positive integer $n$ let $f(n)$ denote the number of unordered
factorizations as products of integers greater than $1$. Furthermore we define
the set
\[
  \mathcal{F}(x)=\{m\vert m\leq x, m=f(n)\text{ for some $n$.}\}
\]
Then Balasubramanian and Luca
\cite{balasubramanian_luca2011:number_factorizations_integer} provided an upper
bound for $\left| \mathcal{F}(x)\right|$ by considering the number $p(n)$ of
partitions of $n$ with $\alpha=\frac12$.

Their result was refined by Chen and Li \cite{chen_li2015:square_root_partition}
and Luca and Ralaivaosaona \cite{luca_ralaivaosaona2016:explicit_bound_number}
to obtain the asymptotic formula
\[
  p_{\frac12}(n)\sim
Kn^{-8/9}\exp\biggl(\frac{6\zeta(3)^{1/3}}{4^{2/3}}n^{2/3}+\frac{\zeta(2)}{(4\zeta(3))^{1/3}}n^{1/3}\biggr),\]
where
\[
  K=\frac{(4\zeta(3))^{7/18}}{\pi
  A^2\sqrt{12}}\exp\biggl(\frac{4\zeta(3)-\zeta(2)^2}{24\zeta(3)}\biggr)
\]
and $A$ is the Glaisher--Kinkelin constant (\textit{cf.} \cite{finch2003:mathematical_constants}*{Section 2.15}). In subsequent
work Li and Chen~\cites{li_chen2016:r_th_root, li_chen2018:r_th_root} extended
the result to arbitrary powers $0<\alpha<1$ not being of the form $\alpha=1/k$
for a positive integer~$k$. Li and Wu~\cite{li_wu2021:k_th_root} (using similar
methods as Tenenbaum, Wu and Li~\cite{tenenbaum_wu_li2019:power_partitions_and})
considered the case of $\alpha=1/k$. Finally we want to mention Chern
\cite{Chern2021:square_root_partitions} who provided the asymptotics with
explicit constants for the partition function in the case $\alpha=\frac12$.

In the present paper we want to consider the length $m$ of a random
\emph{restricted} partition into $\alpha$-powers. A \emph{restricted} partition
of a positive integer $n$ into $\alpha$-powers (or $\alpha$-partition) is a representation of the form
\[
  n=\lfloor a_1^\alpha\rfloor+\cdots+\lfloor a_m^\alpha\rfloor,
\]
with $1\leq a_1<a_2<\cdots<a_m$. We denote by $p_{\alpha}(n)$ and
$q_{\alpha}(n)$ the number of \emph{unrestricted} and \emph{restricted}
$\alpha$-partitions of $n$. Furthermore we denote by $p_{\alpha}(n,m)$ and
$q_{\alpha}(n,m)$ the number of \emph{unrestricted} and \emph{restricted}
partitions of length $m$, respectively.

Among the first to consider $p_1(n,m)$ were Erd\H{o}s and Lehner
\cite{erdos_lehner1941:distribution_of_summands_in_partitions}. They
investigated a local limit theorem for $m$ close to the mean. Distinct parts were
first studied by Wilf \cite{wilf1983:problems_in_combinatorial_asymptotics} and
Goh and Schmutz \cite{goh_schmutz1995:distinct_part_sizes_in_partitions}
provided a central limit theorem. Schmutz
\cite{schmutz1994:part_sizes_of_partitions} extended their result to
multivariate cases using Meinardus' scheme (see Meinardus
\cite{meinardus1954:meinardus_scheme}). Hwang
\cite{hwang2001:limit_theorems_number}, also based on Meinardus' scheme, proved a
central and a local limit theorem with weaker necessary conditions on the
summands in the random partition.

Meinardus' scheme is a clever way to incorporate the different steps of the
saddle point method (\textit{cf.} Section VIII.3 of Flajolet and Sedgewick
\cite{flajolet_sedgewick2009:analytic_combinatorics}) in order to provide an
asymptotic function for the partition function $p_1(n)$ under some conditions on
the summands. One of the conditions involves the singularity analysis of the
associated Dirichlet generating function. In Meinardus' original work as well as
in the work by Hwang this function has only one simple pole on the real line.
Granovsky and Stark \cite{Granovsky_Stark2012:meinardus_multiple_singularities}
and Chern \cite{Chern2021:square_root_partitions} adapted the scheme to multiple
poles. Madritsch and Wagner \cite{madritsch_wagner2010:central_limit_theorem}
allowed infinitely equidistant simple poles along a vertical line in the complex
plane in their central limit theorem. Ralaivaosaona
\cite{ralaivaosaona2012:random_prime_partitions}, motivated by an open problem
in Hwang's paper \cite{hwang2001:limit_theorems_number}, considered partitions
into prime numbers, whose associated Dirichlet generating function has a
completely different singular behavior. In a previous work \cite{lipnik_madritsch_tichy:central_limit_theorem} we
established a central limit theorem and the aim of the present work is to extend
these considerations to provide a local limit theorem.

\section{Statement of results}

Let $0<\alpha<1$ be a fixed real number and set $\beta:=1/\alpha>1$.
% Thus in the sequel we omit the index
% $\alpha$ for $q_{\alpha}(n)$ and $q_{\alpha}(n,m)$, respectively. 
Furthermore let $\Pi(n)$ be the set of \emph{restricted} partitions of $n$ into
$\alpha$-powers and $q(n)=\left|\Pi(n)\right|$ be its cardinality. Furthermore
we call $m$ the length of the partition. Then we denote by $\Pi(n,m)$ the set of
partitions of length $m$ and $q(n,m)=\left|\Pi(n,m)\right|$ its cardinality,
respectively.

We take a closer look at the random variable $\varpi_n$ counting the number of
summands in a random partition of $n$ into $\alpha$-powers. In a recent work we
established the following central limit theorem.
\begin{thm}[{\cite{lipnik_madritsch_tichy:central_limit_theorem}}]
  \label{thm:clt}
  Let $0<\alpha<1$ and let $\varpi_n$ be the random variable counting
  the number of summands in a random restricted partition of $n$ into
  $\alpha$-powers. Then $\varpi_n$ is asymptotically normally
  distributed with mean $\mathbb{E}(\varpi_n)\sim\mu_n$ and variance
  $\mathbb{V}(\varpi_n)\sim\sigma_n^2$, i.e.,
  \[
    \mathbb{P}\left(\frac{\varpi_{n} - \mu_n}{\sigma_n}< x\right)
    =\frac{1}{\sqrt{2\pi}}\int_{-\infty}^x e^{-t^2/2}\dd t+o(1),
  \]
  uniformly for all $x$ as $n\to\infty$. Moreover the mean~$\mu_n$ and the
  variance~$\sigma_n^2$ are given by
  \begin{equation}\label{eq:mu}
    \mu_n = \sum_{k\geq1}\frac{g(k)}{e^{\eta k}+1} \sim c_{1}n^{1/(\alpha + 1)}
  \end{equation}
  and
  \begin{equation}\label{eq:sigma2}
    \sigma_n^2 = \sum_{k\geq1}\frac{g(k)e^{\eta k}}{(e^{\eta k}+1)^2}
      -\frac{\left(\sum_{k\geq1}\frac{g(k)ke^{\eta k}}{(e^{\eta k}+1)^2}\right)^2}
      {\sum_{k\geq1}\frac{g(k)k^2e^{\eta k}}{(e^{\eta k}+1)^2}}
      \sim c_{2}n^{1/(\alpha + 1)}
  \end{equation}
  where $\eta$ is the implicit solution of
  \[
    n=\sum_{k\geq1}\frac{k}{e^{\eta k}+1}.
  \]
\end{thm}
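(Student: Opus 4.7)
The plan is to establish the CLT by analyzing the characteristic function of $\varpi_n$ via the saddle point method applied to the bivariate generating function. Let $g(k) := \#\{a \in \N : \lfloor a^\alpha \rfloor = k\}$, which satisfies $g(k) \sim \beta k^{\beta-1}$ by the mean value theorem. The bivariate generating function marking the length of a restricted $\alpha$-partition is
\[
F(z, w) = \prod_{a \geq 1}\bigl(1 + w z^{\lfloor a^\alpha \rfloor}\bigr) = \prod_{k \geq 1}(1 + w z^k)^{g(k)},
\]
so the probability generating function of $\varpi_n$ is $\Phi_n(w) = [z^n] F(z,w)/[z^n] F(z,1)$. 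By Lévy's continuity theorem it then suffices to show that $e^{-it\mu_n/\sigma_n}\Phi_n(e^{it/\sigma_n}) \to e^{-t^2/2}$ for every fixed $t \in \R$.

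The key tool is Cauchy's formula on the circle $\abs{z} = e^{-\eta}$, with $\eta = \eta_n > 0$ chosen as the real saddle point solving $n = -\partial_\eta \log F(e^{-\eta}, 1)$. To analyze $\log F(e^{-\eta}, w)$ for $\eta \downarrow 0$, I would expand $\log(1+we^{-\eta k}) = \sum_{j\geq 1}(-1)^{j-1}w^j e^{-j\eta k}/j$ and invoke the Mellin--Barnes technique. The Dirichlet generating series $D(s) = \sum_{k \geq 1} g(k) k^{-s}$, treated by Abel summation using $\sum_{k \leq K} g(k) = \lfloor K^\beta \rfloor$, extends meromorphically to a half-plane containing $s = 0$ with a simple pole of residue $\beta$ at $s = \beta$. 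Shifting the Mellin contour past this pole produces an explicit asymptotic expansion of $\log F(e^{-\eta},1)$ in powers of $\eta$, from which one reads off $\eta_n \asymp n^{-\alpha/(\alpha+1)}$; a parallel expansion of $\log F(e^{-\eta}, e^{i\tau})$ for small real $\tau$, after adjusting $\eta$ implicitly so that the saddle point equation continues to hold, yields a quadratic Taylor expansion in $\tau$ whose linear and quadratic coefficients reproduce $\mu_n$ and $\sigma_n^2$ as stated. The subtractive correction in~\eqref{eq:sigma2} is precisely the covariance term arising from differentiating the saddle point $\eta = \eta(n,w)$ with respect to $w$.

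The main technical obstacles are twofold. First, the Mellin analysis must accommodate the fluctuating remainder of $g(k) - \beta k^{\beta-1}$ coming from the floor function; this is handled by Abel summation together with classical bounds on $\zeta(s)$ along vertical lines in the strip $0 < \Re s < \beta$, which is the main source of bookkeeping. Second, the contribution of the contour away from the real axis (i.e., $\abs{\varphi}$ bounded below with $z = e^{-\eta + i\varphi}$) must be shown to be negligible uniformly in the perturbation $\tau = t/\sigma_n$; this is the analogue of the classical circle-method minor-arc bound and is the step where the condition $0<\alpha<1$ (hence $\beta>1$) plays a decisive role, since the spacing of the peaks in $\abs{F(e^{-\eta+i\varphi},1)}$ is governed by $\beta$. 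Once both estimates are in place, a routine saddle point expansion produces the Gaussian characteristic function, and Lévy's continuity theorem concludes the proof.
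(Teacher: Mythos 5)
Your plan follows essentially the same route as the paper and its companion work (from which Theorem~\ref{thm:clt} is quoted): the product generating function $Q(z,u)=\prod_{k\ge1}(1+uz^k)^{g(k)}$, a saddle point $\eta$ (resp.\ the pair $(r,\rho)$), Mellin analysis of $\log Q$ through $D(s)$, the polylogarithm and $\Gamma(s)$ with converse mapping, minor-arc bounds away from the positive real axis, and a quadratic expansion at the saddle giving the Gaussian characteristic function, with $\mu_n$ and $\sigma_n^2$ arising exactly as the first-order coefficient and the saddle-corrected second-order coefficient. Two harmless slips that do not affect the leading-order conclusions: the partial sum is $\sum_{k\le K}g(k)=\lceil (K+1)^\beta\rceil-1$ rather than $\lfloor K^\beta\rfloor$, and for $\beta\ge 2$ the meromorphic continuation of $D(s)$ carries further simple poles beyond $s=\beta$ (at shifts by integers), though only the rightmost pole matters for the stated asymptotics of $\mu_n$ and $\sigma_n^2$.
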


In the present work we consider the local behavior of the random variable
$\varpi_n$. 
\begin{thm}
  \label{thm:main}
  Let $0<\alpha<1$ and let $\varpi_n$ be the random variable counting the number
  of summands in a random partition of $n$ into $\alpha$-powers. Furthermore let
  $\mu_n$ and $\sigma_n^2$ be the mean and variance of $\varpi_n$, respectively.
  If $m=\mu_n+x\sigma_n$ with $x=o\left(\sigma_n\right)$, then 
  \[
    \mathbb{P}\left(\varpi_{n}=m\right)
    =\frac{e^{-x^2/2}}{\sqrt{2\pi}\sigma_n}
    \left(1+\Oh\left(\frac{\abs{x}+\abs{x}^3}{n^{\alpha/(2\alpha+2)}}\right)\right),
  \]
  uniformly in $x$.
\end{thm}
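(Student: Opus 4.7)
The plan is to prove the local limit theorem by Fourier inversion of the characteristic function of $\varpi_n$, combined with uniform asymptotic estimates on that characteristic function. Let
\[
  Q(z,u) = \prod_{k\geq 1}\bigl(1 + u\,z^{\lfloor k^\alpha\rfloor}\bigr),
  \qquad
  \phi_n(\theta) = \frac{[z^n]Q(z,e^{i\theta})}{[z^n]Q(z,1)}.
\]
Orthogonality on the unit circle gives
\[
  \mathbb{P}(\varpi_n = m) = \frac{1}{2\pi}\int_{-\pi}^{\pi}\phi_n(\theta)\,e^{-im\theta}\,d\theta,
\]
and after the substitution $\theta = t/\sigma_n$ the task reduces to extracting a Gaussian main term from a central range $\abs{t}\leq T_n$ with a slowly growing cut-off $T_n$, while killing the complement.

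On the central range, I would reuse the saddle-point and Mellin transform machinery of \cite{lipnik_madritsch_tichy:central_limit_theorem}, pushed one order further. The Mellin transform of $\eta\mapsto\log Q(e^{-\eta},e^{i\theta})$ has its dominant singularity at $s = 1/\alpha + 1$; shifting the contour past this pole yields a uniform expansion in powers of $\theta$, and evaluating at the saddle point $\eta_\theta$ given by $n = -\partial_\eta\log Q(e^{-\eta},e^{i\theta})$ produces
\[
  \phi_n(\theta)\,e^{-i\mu_n\theta}
  = \exp\Bigl(-\tfrac12\sigma_n^{2}\theta^{2} + \Oh\bigl(\sigma_n^{2-\alpha}\abs{\theta}^{3} + \sigma_n^{2}\theta^{4}\bigr)\Bigr).
\]
With $\theta = t/\sigma_n$ the error inside the exponential becomes $\Oh\bigl((\abs{t}^{3}+t^{4})/\sigma_n^{\alpha}\bigr)$; Gaussian integration against $e^{-ixt}$ then gives the main term $e^{-x^{2}/2}/(\sqrt{2\pi}\sigma_n)$ together with the multiplicative error $(\abs{x}+\abs{x}^{3})/\sigma_n^{\alpha}$, which matches the theorem in view of $\sigma_n^{\alpha}\asymp n^{\alpha/(2\alpha+2)}$. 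The linear contribution $\abs{x}$ reflects the shift of the saddle-point abscissa caused by a nonzero $\theta$, while the cubic contribution $\abs{x}^{3}$ is the standard third-cumulant (Edgeworth-type) correction.

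The hardest part will be the tail estimate: for $T_n/\sigma_n\leq\abs{\theta}\leq\pi$, one has to prove that $\abs{\phi_n(\theta)}$ is super-polynomially smaller than $1/\sigma_n$. Using the identity $\abs{1+e^{i\theta}w}^{2} = (1+w)^{2} - 2w(1-\cos\theta)$ with $w = e^{-\eta\lfloor k^\alpha\rfloor}$, summing logarithms and using $\log(1-y)\leq -y$ gives
\[
  \log\biggl\lvert\frac{Q(e^{-\eta},e^{i\theta})}{Q(e^{-\eta},1)}\biggr\rvert
  \leq -c\,(1-\cos\theta)\sum_{k\geq 1}\frac{e^{-\eta\lfloor k^\alpha\rfloor}}{(1+e^{-\eta\lfloor k^\alpha\rfloor})^{2}},
\]
and the Mellin transform of the remaining sum grows as a positive power of $n$, producing a Gaussian-type bound $\abs{\phi_n(\theta)}\leq \exp(-c'\sigma_n^{2}(1-\cos\theta))$ up to polynomial factors, which makes the tail of the Fourier integral negligible. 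The main technical subtlety beyond this bound is to handle the interplay between the saddle point $\eta_\theta$ and the phase $\theta$ uniformly on the central range, which requires tracking the deformation $\eta_\theta-\eta_0$ carefully in order to calibrate both the cut-off $T_n$ and the contour in the $z$-plane; combining the central Gaussian contribution with the tail bound and dividing by the one-variable saddle-point asymptotics of $[z^n]Q(z,1)$ then completes the argument.
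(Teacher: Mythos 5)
Your scheme keeps $u$ on the unit circle: you invert the characteristic function $\phi_n(\theta)=\mathbb{E}(e^{i\theta\varpi_n})$ and evaluate $[z^n]Q(z,e^{i\theta})$ by a saddle point in $z$ alone. This cannot reach the stated uniformity $x=o(\sigma_n)$, because every error your method produces is \emph{additive}, not multiplicative relative to $e^{-x^2/2}$. The saddle-point evaluation of $[z^n]Q(z,e^{i\theta})$ and of $[z^n]Q(z,1)$ carries a relative error that is only polynomially small in $n$ (the Mellin/converse-mapping remainders), and when you integrate $\phi_n(\theta)e^{-im\theta}$ over the central range, that error is bounded by $(\text{poly error})\cdot\int e^{-\sigma_n^2\theta^2/2}\dd{\theta}\asymp(\text{poly error})/\sigma_n$: the oscillation $e^{-ix\sigma_n\theta}$, which is what creates the factor $e^{-x^2/2}$ in the main term, is destroyed when you take absolute values in the remainder. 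The tail cut-off behaves the same way, contributing about $e^{-cT_n^2}/\sigma_n$ with no factor $e^{-x^2/2}$. So your argument yields $\mathbb{P}(\varpi_n=m)=e^{-x^2/2}/(\sqrt{2\pi}\sigma_n)+\Oh(\varepsilon_n/\sigma_n)$ with $\varepsilon_n$ only polynomially small, which is equivalent to the theorem only for $x=\Oh(\sqrt{\log n})$; already for $x\asymp n^{\alpha/(8\alpha+8)}$ (well inside $x=o(\sigma_n)$, and still in the range where the theorem's error term is $o(1)$) the target $e^{-x^2/2}$ is super-polynomially small and is swallowed by your additive error.

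The missing idea is exponential tilting in the second variable. The paper integrates over $\abs{u}=e^{\rho}$ with $(r,\rho)$ the solution of the two-dimensional saddle-point system $n=-f'_{10}(r,\rho)$, $m=f'_{01}(r,\rho)$, evaluates $q(n,m)$ and $q(n)$ separately with polynomially small \emph{relative} errors, and then obtains the Gaussian factor from the exponent $H_n(\rho)=-m\rho+nr(\rho)+f(r(\rho),\rho)-nr(0)-f(r(0),0)$, expanding around $\rho=0$ and inverting $m=S(\rho)$ by Lagrange inversion to get $\rho\sim x/\sigma_n$; since the tilt makes $m$ the mean of the tilted law, all errors stay multiplicative and the uniformity in $x$ follows. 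Two smaller points: the cubic term in your central estimate should be of order $\sigma_n^{2}\abs{\theta}^{3}$, not $\sigma_n^{2-\alpha}\abs{\theta}^{3}$ (third-order derivatives here have the same order $n^{1/(\alpha+1)}$ as the variance, as in the paper's bound $\Oh\left(n^{\beta/(\beta+1)}\rho^3\right)$), and your tail bound only treats the $u$-integral at fixed $\abs{z}=e^{-\eta}$; you still need an estimate of the type of Lemma~\ref{lem:integrant_estimate_for_large_t} to control the $z$-integral for large $\abs{t}$ before any of the coefficient asymptotics are available.
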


%\todo[inline]{We could put more details in the exponent: $e^{-x^2/2+F(x/\sigma_n)}$.}

\section{Overview of proof}
We fix $0<\alpha<1$ and set $\beta:=1/\alpha$. The probability distribution of
$\varpi_n$ is given by $\mathbb{P}(\varpi_n=m)=q(n,m)/q(n)$ for $1\leq n\leq m$.
Thus we need to calculate $q(n,m)$ as well as $q(n)$. A convenient way to
establish this is to consider the bivariate generating function
\[
  Q(z,u)=1+\sum_{m\geq1}\sum_{n\geq1}q(n,m)u^mz^n,
\]
where we have set $q(n,m)=0$ for $m>n$. By a double application of Cauchy's
integral formula we have
\begin{gather}\label{eq:Cauchy_Q}
  q(n,m)=[u^mz^n]Q(z,u)=\frac{1}{(2\pi i)^2}\int_{\abs{u}=e^{\rho}}\int_{\abs{z}=e^{-r}}
    \frac{Q(z,u)}{z^{n+1}u^{m+1}}\dd{z}\dd{u}.
\end{gather}

For our treatment we prefer a multiplicative representation of this function.
For an integer $k$ we define $g(k)=\#\{n\geq 1\colon \lfloor
n^\alpha\rfloor=k\}$. Then we may write (\textit{cf.} Lemma 1 of
\cite{lipnik_madritsch_tichy:central_limit_theorem})
\[
  Q(z,u)=\prod_{k\geq1}\left(1+uz^k\right)^{g(k)}
  =1+\sum_{n\geq1}q(n)\mathbb{E}(u^{\varpi_n})z^n.
\]
Moreover we replace $z$ and $u$ by $e^{-r-it}$ and
$e^{\rho+i\theta}$, respectively, and define
\[
  f(\tau,\sigma)=\log Q(e^{-\tau},e^{\sigma}),
\]
where $\tau=r+it$ and $\sigma=\rho+i\theta$. Then our double integral from
\eqref{eq:Cauchy_Q} transforms to 
\begin{equation}\label{eq:Cauchy_f}
  \begin{split}
    q(n,m)&=\frac{1}{(2\pi i)^2}\int_{\abs{u}=e^{\rho}}\int_{\abs{z}=e^{-r}}
      \frac{Q(z,u)}{z^{n+1}u^{m+1}}\dd{z}\dd{u}\\
    &=\frac{\exp(-m\rho+nr)}{(2\pi)^2}\int_{-\pi}^{\pi}\int_{-\pi}^{\pi}
      \exp(-mi\theta+int+f(r+it,\rho+i\theta))\dd{t}\dd{\theta}.
  \end{split}
\end{equation}

We need to analyze the function $f$ and its derivatives. Thus for integers $p,q\geq0$ we write 
\[
  f^{(p+q)}_{pq}(\tau,\sigma)=\frac{\partial^{p+q}f}{\partial^p\tau\partial^q\sigma}(\tau,\sigma)
\]
for short. In Section \ref{sec:analysis} we provide the necessary methods to
obtain the estimates for $f$ and its derivatives. 

Now we expand $f$ around $(t,\theta)=(0,0)$:
\begin{multline*}
  f(r+it,\rho+i\theta)=f(r,\rho)+it f'_{10}(r,\rho)+i\theta f'_{01}(r,\rho)\\
  - f''_{20}(r,\rho)\frac{t^2}{2}-f''_{11}(r,\rho)t\theta-f''_{02}(r,\rho)\frac{\theta^2}{2}+\Oh\left(\sup_{\substack{p+q=3\\0\leq t_0\leq t\\0\leq \theta_0\leq \theta}}f'''_{pq}(r+it_0,\rho+i\theta_0)t^p\theta^q\right)
\end{multline*}
The central idea of the saddle point method is to chose
$(r,\rho)\in\R^*_+\times\R$ in such a way that the linear terms in the $\exp$
function in \eqref{eq:Cauchy_f} cancel, \textit{i.e.} $n=-f'_{10}(r,\rho)$ and
$m=f'_{01}(r,\rho)$. In Section \ref{sec:saddle_point} we will show that such a
choice of $(r,\rho)$ is always possible and unique.

Next we set
\[
  t_n=r^{1+3\beta/7}\quad\text{and}\quad
  \theta_n=r^{3\beta/7}
\]
and split the double integral in \eqref{eq:Cauchy_f} into three parts
\begin{align*}
  (I)& & &\abs{\theta}\leq \theta_n & \abs{t}&\leq t_n,\\
  (II)& & &\abs{\theta}> \theta_n & \abs{t}&\leq t_n\quad\text{and}\\
  (III)& & 0\leq&\abs{\theta}\leq \pi & \abs{t}&> t_n.
\end{align*}
In Section \ref{sec:estimates_away} we consider the parts $(II)$ and $(III)$ and
show that in these cases $\abs{f(r+it,\rho+i\theta)-f(r,\rho)}$ is ``small''.
This allows us to estimate the corresponding integrals.

Then in Section \ref{sec:q_n_m} we put everything together to obtain an
asymptotic formula for $q(n,m)$. Since considering $q(n)$ instead of $q(n,m)$
corresponds to putting $\sigma=0$, by a similar but
shorter treatment we obtain an asymptotic formula for $q(n)$ in Section \ref{sec:q_n}.

Finally in Section \ref{sec:proof_of_llt} we combine the asymptotic formulas in
order to show the local limit theorem (Theorem \ref{thm:main}).

\section{Analysis of the function $f$}
\label{sec:analysis}

We start with the analysis of $f$ and its derivatives using Mellin transform.
For a given function $h$, its Mellin transform
$\mathcal{M}[h](s)=h^*(s)$ is given by
\[h^*(s)=\int_0^\infty h(t)t^{s-1}\dd{t}.\]
Applying this transform to our function $f$ yields
\[
  f^*(s,\sigma)=\int_0^\infty f(\tau,\sigma)\tau^{s-1}\dd \tau
  =D(s)Y(s,e^\sigma),
\]
where $D(s)$ and $Y(s,u)$ are the associated Dirichlet series and the Mellin
transform of $\log(1+ue^{-t})$, respectively, \textit{i.e.}
\[
  D(s)=\sum_{k\geq1}\frac{g(k)}{k^s}
  \quad\text{and}\quad
  Y(s,u)=\int_{0}^\infty\log(1+ue^{-t})t^{s-1}\dd t.
\]

The actual estimates follow by the converse mapping transform. To this end for
$\alpha<\beta$ two reals we denote by $\langle \alpha,\beta\rangle$ a vertical
stripe in the complex plane, \textit{i.e.}
\[
  \langle \alpha,\beta\rangle:=\{z\in \C\colon \alpha\leq \Re z\leq \beta\}.
\]
\begin{thm}[{\cite{flajolet_gourdon_dumas1995:mellin_transforms_and}*{Theorem
  4}}, Converse Mapping Theorem]\label{thm:converse_mapping}
  Let $f(x)$ be continuous in $(0,+\infty)$ with Mellin transform $f^*(s)$
  having a nonempty fundamental strip $\langle\alpha,\beta\rangle$. Assume that
  $f^*(s)$ admits a meromorphic continuation to the strip $\langle \gamma, \beta
  \rangle$ for some $\gamma<\alpha$ with a finite number of poles there, and is
  analytic on the vertical line $\Re(s)=\gamma$. Assume also that there exists a real
  number $\eta\in(\alpha,\beta)$ such that
  \begin{gather}\label{fgd:eq23}
    f^*(s)=\Oh(\abs{s}^{-r})
  \end{gather}
  with $r > 1$ as $\abs{s}\to\infty$ in $\gamma\leq\Re(s)\leq\eta$. If $f^*(s)$
  admits the Laurent expansion
  \begin{gather}\label{fgd:eq24}
    f^*\asymp \sum_{(\xi,k)\in A}d_{\xi,k}\frac{1}{(s-\xi)^k}
  \end{gather}
  for $s\in \langle\gamma,\alpha\rangle$, then an asymptotic expansion of $f(x)$
  at $0$ is given by
  \[f(x)=\sum_{(\xi,k)\in A}d_{\xi,k}\left(\frac{(-1)^{k-1}}{(k-1)!}x^{-\xi}(\log x)^{k-1}\right)+\Oh(x^{-\gamma}).\]
\end{thm}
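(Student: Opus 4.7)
The plan is to combine the Mellin inversion formula with a contour shift across the meromorphic continuation of $f^{*}$, collecting residues at the tabulated poles. By the hypothesis that $\langle\alpha,\beta\rangle$ is a nonempty fundamental strip together with the polynomial decay \eqref{fgd:eq23}, the Mellin inversion formula
\[
  f(x)=\frac{1}{2\pi i}\int_{\eta-i\infty}^{\eta+i\infty}f^{*}(s)\,x^{-s}\dd s
\]
holds for $x>0$ at the abscissa $\Re(s)=\eta\in(\alpha,\beta)$ given by hypothesis. The goal is to push this vertical line leftwards to $\Re(s)=\gamma$, so that the collected residues produce the singular expansion and the shifted line integral supplies the remainder.

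First I would fix a large height $T>0$ and apply Cauchy's residue theorem to the rectangle with vertical sides $\Re(s)=\gamma$ and $\Re(s)=\eta$ and horizontal sides $\Im(s)=\pm T$. Since $f^{*}$ is meromorphic on $\langle\gamma,\beta\rangle$ with only finitely many poles and analytic on the bounding line $\Re(s)=\gamma$, for $T$ large enough every pole in $\langle\gamma,\eta\rangle$ lies strictly inside the rectangle. On each horizontal side the integrand is controlled by $\abs{f^{*}(s)}\,x^{-\Re s}\ll T^{-r}\max(x^{-\eta},x^{-\gamma})$, and integrating over the finite width $\eta-\gamma$ yields an $\Oh(T^{-r})$ contribution that vanishes as $T\to\infty$ because $r>1$. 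Taking $T\to\infty$ therefore gives
\[
  f(x)=\sum_{\xi}\operatorname*{Res}_{s=\xi}\bigl(f^{*}(s)\,x^{-s}\bigr)
  +\frac{1}{2\pi i}\int_{\gamma-i\infty}^{\gamma+i\infty}f^{*}(s)\,x^{-s}\dd s,
\]
the sum ranging over all poles of $f^{*}$ in $\langle\gamma,\eta\rangle$. Since the fundamental strip contains no poles, these coincide with the poles in $\langle\gamma,\alpha\rangle$, i.e.\ the pairs $(\xi,k)\in A$ encoded by \eqref{fgd:eq24}.

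Next I would evaluate each residue by pairing the prescribed principal part $\sum_{(\xi,k)\in A}d_{\xi,k}(s-\xi)^{-k}$ with the Taylor expansion $x^{-s}=x^{-\xi}\sum_{j\geq 0}\frac{(-\log x)^{j}}{j!}(s-\xi)^{j}$ and reading off the coefficient of $(s-\xi)^{-1}$. The term $d_{\xi,k}/(s-\xi)^{k}$ is matched by $j=k-1$ and contributes exactly
\[
  d_{\xi,k}\,\frac{(-1)^{k-1}}{(k-1)!}\,x^{-\xi}(\log x)^{k-1},
\]
which is the summand appearing in the claimed expansion. The remaining vertical integral is bounded directly by
\[
  \abs*{\frac{1}{2\pi i}\int_{\gamma-i\infty}^{\gamma+i\infty}f^{*}(s)\,x^{-s}\dd s}
  \leq\frac{x^{-\gamma}}{2\pi}\int_{-\infty}^{\infty}\abs{f^{*}(\gamma+it)}\dd t
  =\Oh(x^{-\gamma}),
\]
where the $t$-integral converges thanks to the $\abs{s}^{-r}$ decay with $r>1$ together with the analyticity of $f^{*}$ on $\Re(s)=\gamma$, which rules out any obstruction near $t=0$.

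The main obstacle, and really the only delicate point, is ensuring that the uniform polynomial decay \eqref{fgd:eq23} holds throughout the closed strip $\gamma\leq\Re(s)\leq\eta$ and not merely on its two bounding lines; this uniform bound is precisely what makes the vanishing of the horizontal segments and the convergence of the shifted vertical integral compatible. Once that hypothesis is in force, the residue calculus and the tail estimate go through mechanically, and regrouping the residue contributions by the index set $A$ delivers the stated asymptotic expansion with error term $\Oh(x^{-\gamma})$.
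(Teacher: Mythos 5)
The paper does not prove this statement at all --- it is imported verbatim as Theorem 4 of Flajolet--Gourdon--Dumas --- and your argument is precisely the standard proof given in that reference: Mellin inversion on the line $\Re(s)=\eta$, a leftward contour shift justified by the uniform $\Oh(\abs{s}^{-r})$ decay with $r>1$, residue extraction via the Laurent expansion of $f^*$ against $x^{-s}=x^{-\xi}\sum_{j\geq0}\frac{(-\log x)^j}{j!}(s-\xi)^j$, and the trivial bound $\Oh(x^{-\gamma})$ on the shifted integral. Your proof is correct, including the identification of the poles in $\langle\gamma,\eta\rangle$ with those in $\langle\gamma,\alpha\rangle$ and the sign/orientation bookkeeping in the residue theorem.
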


For a successful application of this theorem we need the singularity analysis of
$D(s)$ and $Y(s,u)$, respectively. We start with $D(s)$ and recall that
$g(k)=\#\{n\geq1\colon \lfloor
n^\alpha\rfloor = k\}$. Thus we obtain
\[
  g(k)=\lceil (k+1)^\beta\rceil -\lceil k^\beta\rceil
  =\sum_{\nu=1}^{\lceil \beta-1\rceil}\binom{\beta}{\nu}k^{\beta-\nu}+\widetilde{g}(k),
\]
where $\beta=1/\alpha$ and $\widetilde{g}$ is a
uniformly bounded function. Thus we have
\[
  D(s)=\sum_{\nu=1}^{\lceil \beta-1\rceil}\binom{\beta}{\nu}\zeta(s-\beta+\nu)
  +\Oh\left(\zeta(s)\right),
\]
where $\zeta$ is the Riemann zeta function.

%Moreover we have the following bounds on $g(k)$.
\begin{lem}{\cite{li_chen2016:r_th_root}*{Lemma 3.3}}
  \label{lem:order_of_g}
  For $0<\alpha<1$ we have
  \[(\beta-1)k^{\beta-1}<g(k)<\beta 2^\beta k^{\beta-1},\]
  where $\beta=1/\alpha$.
\end{lem}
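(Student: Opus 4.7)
The plan is to compute $g(k)$ exactly and then bound the resulting ceiling-difference using the mean value theorem. First I would observe that $\lfloor n^\alpha\rfloor=k$ is equivalent to $k^\beta\le n<(k+1)^\beta$, so that
\[
  g(k)=\lceil (k+1)^\beta\rceil-\lceil k^\beta\rceil.
\]
The elementary ceiling bounds $x\le\lceil x\rceil<x+1$ then immediately give
\[
  (k+1)^\beta-k^\beta-1<g(k)<(k+1)^\beta-k^\beta+1.
\]

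Next, applying the mean value theorem to $x\mapsto x^\beta$ on $[k,k+1]$ yields $(k+1)^\beta-k^\beta=\beta\xi^{\beta-1}$ for some $\xi\in(k,k+1)$. Since $\beta>1$ the function $x^{\beta-1}$ is increasing, so this produces the two-sided sandwich
\[
  \beta k^{\beta-1}<(k+1)^\beta-k^\beta<\beta(k+1)^{\beta-1}\le\beta(2k)^{\beta-1}=\beta 2^{\beta-1}k^{\beta-1},
\]
where the final estimate uses $k+1\le 2k$ for $k\ge 1$.

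Combining the two displays finishes both inequalities. For the lower bound I would write $g(k)>\beta k^{\beta-1}-1\ge(\beta-1)k^{\beta-1}$, where the last step uses $k^{\beta-1}\ge 1$ for $k\ge 1$ and $\beta>1$. For the upper bound I would write $g(k)<\beta 2^{\beta-1}k^{\beta-1}+1\le\beta 2^{\beta}k^{\beta-1}$, since the slack $\beta 2^{\beta-1}k^{\beta-1}\ge\beta\ge 1$ absorbs the stray $+1$. There is no real obstacle in this argument; the only point worth checking is that the $\pm 1$ errors from the ceiling bounds are dominated by the main terms, which is precisely what forces the slightly looser constants $\beta-1$ and $\beta 2^{\beta}$ rather than the sharper value $\beta$ coming from the mean value theorem.
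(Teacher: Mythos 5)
Your proof is correct: the ceiling identity $g(k)=\lceil(k+1)^\beta\rceil-\lceil k^\beta\rceil$, the mean value theorem bounds $\beta k^{\beta-1}<(k+1)^\beta-k^\beta<\beta 2^{\beta-1}k^{\beta-1}$, and the absorption of the $\pm1$ errors using $k^{\beta-1}\ge 1$ and $\beta>1$ all check out, including the strictness of both inequalities. The paper itself offers no proof---it simply cites Lemma 3.3 of Li and Chen---and your elementary argument is essentially the standard one behind that citation, so there is nothing further to reconcile.
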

Thus
\[
  D(s)=\sum_{k\geq1}\frac{g(k)}{k^s}\asymp \zeta(s-\beta+1),
\]
where $A\asymp B$ means that there exist constants $\gamma_1,\gamma_2>0$ such
that $\gamma_1 A\leq B\leq \gamma_2 A$.

Since $\zeta(s)$ is not bounded for $\Im(s)\to\pm\infty$, which is necessary for
the application of Theorem \ref{thm:converse_mapping}, we
need to also analyze the function $Y(s,u)$ defined as the Mellin
transform of $t\mapsto (1+ue^{-t})$, \textit{i.e.}
\[
  Y(s,u)=\int_{0}^\infty \log\left(1+ue^{-t}\right)t^{s-1}\mathrm{d}t.
\]

% The following lemma tells us that $Y(s,u)$ decays exponentially for
% $\Im(s)\to\pm\infty$. Since $\zeta(s)$ only grows polynomally we may apply the
% converse mapping.
% \begin{lem}[{\cite[Lemma 1]{hwang2001:limit_theorems_number}}]
%   For each fixed $u$ lying in the cut-plane $\C\setminus]-\infty,-1]$, the
%   function $Y(s,u)$ can be meromorphically continued into the whole $s$-plane
%   with simple poles at $s=0,-1,-2,\ldots$. Moreover, $Y(s,u)$ satisfies the
%   estimate
%   \[\abs{Y(\sigma+it,u)}\ll e^{-(\pi/2-\varepsilon)\abs{t}}\]
%   for any $\varepsilon>0$ as $\abs{t}\to+\infty$, uniformly as $\sigma$ and $u$
%   are restricted to compact sets.
% \end{lem}

Therefore we take a closer look at the function $f$ . By
using the Taylor series expansion of $\log(1+x)$ around $x=0$ we get that
\[
  f(\tau,\sigma)=\sum_{k\geq1}g(k)\log(1+e^{\sigma-k\tau})
  =-\sum_{k\geq1}g(k)\sum_{\ell\geq1}\frac{(-e^\sigma)}{\ell}e^{-\ell k\tau}
\]
% \[
%   Y(s,u)=-\Gamma(s)\Li_{s+1}(-u)
%   =\Gamma(s)\sum_{j\geq 1}\frac{(-1)^{j+1}}{j^{s+1}}u^j.
% \]
Thus we have
\begin{align*}
  f^*(s,\sigma)=\int_0^\infty f(\tau,\sigma)\tau^{s-1}\dd{\tau}
  =-D(s)\Li_{s+1}(-e^\sigma)\Gamma(s),
  % &=-\sum_{\nu=1}^{\lceil\beta-1\rceil}\binom{\beta}{\nu}\zeta(s-\beta+\nu)\Li_{s+1}(-e^\sigma)\Gamma(s)
  % +\Oh\left(\zeta(s)\Li_{s+1}(-e^\sigma)\Gamma(s)\right).
\end{align*}
where
\[
  \Li_s(z)=\sum_{n\geq1}\frac{z^n}{n^s},
  \quad\text{and}\quad
  \Gamma(s)=\int_0^\infty e^{-t}t^{s-1}\dd{t}=\mathcal{M}\left[e^{-t}\right](s)
\]
are the polylogarithm and the Euler gamma
function, respectively.

Therefore the Mellin transform of $f$ is a combination of the Riemann zeta
function, the Euler gamma function and the polylogarithm. First the Riemann zeta
function is analytic in the whole complex plane with the exception of a simple
pole in $s=1$ and residue $1$. Furthermore it grows polynomially for
$\Im(s)\to\pm\infty$. Secondly the polylogarithm is analytic in the slit plane
$\C\setminus[1,+\infty[$ (\textit{cf.}
\cite{flajolet1999:singularity_analysis_and}*{Theorem 1}). It also grows
polynomially for $\Im(s)\to\pm\infty$.  Finally the Euler gamma function is
analytic in the whole complex plane except zero and the negative integers, where
it has simple poles. Its residue in $s=0$ is also $1$. By Stirling's formula
this function decays exponentailly for $\Im(s)\to\pm\infty$. This final property
allows us to apply the converse mapping (Theorem \ref{thm:converse_mapping}).

For the derivatives of $f$ the situation is not much different. Let $p,q\geq0$
be positive integers. Then we obtain for the derivatives of $f$ that
\[
  f^{(p+q)}_{pq}(\tau,\sigma)
  =(-1)^{p+1}\sum_{k\geq1}g(k)k^{p}\sum_{\ell\geq1}\frac{(-e^{\sigma})}{\ell^{1-p-q}}e^{-\ell k\tau}.
\]
Thus one easily checks that
\[
  f^*_{pq}(s,\sigma)
  =\int_0^\infty f^{(p+q)}_{pq}(\tau,\sigma)\tau^{s-1}\dd{\tau}
  =(-1)^{p+1}D(s-p)\Li_{s+1-p-q}(-e^{\sigma})\Gamma(s).
\]

Now we analyze $D(s)$. To this end we note that $g(k)$ is a sum of powers of $k$
(plus a rest). Thus for $p,q\geq0$ we recursively introduce the function
$h_{\gamma,p,q}\colon \R^2\to \R$ as follows:
\begin{align*}
  h_{\gamma,0,0}(\tau,\sigma)
  &=\sum_{k\geq1}k^{\gamma}\log(1+e^{\sigma-k\tau})
  =-\sum_{k\geq1}k^{\gamma}
  \sum_{\ell\geq1}\frac{(-1)^{\ell}}{\ell}e^{\ell(\sigma-k\tau)}\quad\text{and}\\
  h_{\gamma,p,q}(\tau,\sigma)
  &=\frac{\partial^{p+q}}{\partial\tau^p\partial\sigma^q}h_{\gamma,0,0}(\tau,\sigma)
  =-\sum_{k\geq1}k^{\gamma+p}
  \sum_{\ell\geq1}(-1)^{\ell+p}\ell^{p+q-1}e^{\ell(\sigma-k\tau)}.
\end{align*}
These definitions will come handy when we replace $g(k)$ by its order (see Lemma
\ref{lem:order_of_g}). The following lemma estimates the functions
$h_{\gamma,p,q}$ with $\gamma>0$ and $p,q\geq0$ non-negative integers.

% \begin{lem}[{\cite[Theorem 1]{flajolet1999:singularity_analysis_and}}]
%   The function $\Li_s(e^{-u})$ satisfies the singular expansion
%   \[\Li_s(e^{-u})\Gamma(1-s)u^{s-1}+\sum_{j\geq0}\frac{(-1)^j}{j!}\zeta(s-j)u^j\]
%   in the sector $-\pi+\varepsilon<\arg(1-e^{-u})<\pi-\varepsilon$.
% \end{lem}

\begin{lem}\label{lem:the_function_h} Let $\delta>0$. Then for $\rho\in\R$ and $r\to 0^+$ we have
  \[h_{\gamma,p,q}(r,\rho)=\Li_{\gamma+2-q}(-e^{\rho})\Gamma(\gamma+p+1)r^{-(\gamma+p+1)}+\Oh\left(r^{-\frac12}\right).\]
  %uniformly for $\delta<e^\rho<\delta^{-1}$.
  % Moreover for $r>0$ and $\rho\to\pm\infty$ we have
  % \[h_{\gamma,p,q}(r,\rho)\asymp \Li_{\gamma+2-q}(-e^{\rho})\Gamma(\gamma+p+1)r^{-(\gamma+p+1)}.\]
\end{lem}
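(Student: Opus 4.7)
The plan is to mirror the Mellin-transform treatment already carried out for the function $f$: I would compute the Mellin transform of $h_{\gamma,p,q}(\cdot,\rho)$ in the first variable, identify its poles in the relevant half-plane, and apply the Converse Mapping Theorem (Theorem~\ref{thm:converse_mapping}). The target vertical line is $\Re(s)=\tfrac12$, which is precisely what produces the error $\Oh(r^{-1/2})$ in the statement.

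First, starting from the double-series representation of $h_{\gamma,p,q}$ and using the identity $\int_0^\infty e^{-\ell k\tau}\tau^{s-1}\dd \tau=(\ell k)^{-s}\Gamma(s)$, a direct calculation (entirely analogous to the one already carried out for $f^{*}$, with the Dirichlet series $D(s-p)$ replaced by $\zeta(s-\gamma-p)$ because the weight is now $k^{\gamma+p}$ instead of $g(k)k^{p}$) gives on the fundamental strip $\Re(s)>\gamma+p+1$
\[
  h^*_{\gamma,p,q}(s,\rho)=(-1)^{p+1}\,\Gamma(s)\,\zeta(s-\gamma-p)\,\Li_{s+1-p-q}(-e^{\rho}).
\]
In the half-plane $\Re(s)\geq\tfrac12$ this function has a single singularity, namely the simple pole of $\zeta(s-\gamma-p)$ at $s=\gamma+p+1$: the poles of $\Gamma$ lie at $s\leq 0$, and $\Li_{s+1-p-q}(-e^{\rho})$ is entire in $s$ because $-e^{\rho}$ avoids the slit $[1,\infty)$. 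The residue at $s=\gamma+p+1$ is
\[
  (-1)^{p+1}\,\Gamma(\gamma+p+1)\,\Li_{\gamma+2-q}(-e^{\rho}),
\]
which via the converse mapping theorem produces the main term $\Gamma(\gamma+p+1)\Li_{\gamma+2-q}(-e^{\rho})\,r^{-(\gamma+p+1)}$ claimed in the statement (up to the sign convention).

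It remains to check the decay hypothesis~\eqref{fgd:eq23} in the strip $\tfrac12\leq\Re(s)\leq\eta$ for some $\eta>\gamma+p+1$. This is exactly where the factorization pays: by Stirling's formula $\Gamma(s)$ decays exponentially on vertical lines, whereas $\zeta(s-\gamma-p)$ and $\Li_{s+1-p-q}(-e^{\rho})$ grow at most polynomially in $\abs{\Im(s)}$, so the whole product satisfies $h^*_{\gamma,p,q}(s,\rho)=\Oh(\abs{s}^{-r})$ for every $r>1$. Applying Theorem~\ref{thm:converse_mapping} with $\gamma'=\tfrac12$ closes the shifted contour and yields the remainder $\Oh(r^{-1/2})$. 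The main technical obstacle I foresee is the polynomial bound on $\Li_{s+1-p-q}(-e^{\rho})$ along the line $\Re(s)=\tfrac12$: the cleanest way is to rely on the meromorphic properties of the polylogarithm recalled after Theorem~\ref{thm:converse_mapping}, but one should double-check that the implied constant stays finite on the entire vertical line (and, for the subsequent applications in Sections~\ref{sec:saddle_point}--\ref{sec:q_n_m}, locally uniformly in $\rho$).
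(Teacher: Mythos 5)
Your proposal matches the paper's own proof essentially verbatim: the paper also treats $h_{\gamma,p,q}$ as a harmonic sum, computes its Mellin transform as the product $\zeta(s-p-\gamma)\Li_{s+1-p-q}(-e^{\sigma})\Gamma(s)$, observes that the exponential decay of $\Gamma$ against the polynomial growth of $\zeta$ and $\Li$ justifies the converse mapping theorem, and collects the single pole at $s=\gamma+p+1$ with the line $\Re(s)=\tfrac12$ giving the error $\Oh(r^{-1/2})$. Your explicit tracking of the $(-1)^{p+1}$ factor is in fact slightly more careful about signs than the paper's displayed transform, so no gap to report.
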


\begin{proof}
  Note that $h_{\gamma,p,q}$ is a so called harmonic sum, \textit{i.e.} a sum of
  the form
  \[\sum_{k\geq1}\lambda_kh(\mu_kx,\rho)\]
  with real $\lambda_k,\mu_k$ for $k\geq1$.
  Therefore we denote by $H_{\gamma,p,q}$ the Mellin transform of
  $h_{\gamma,p,q}$ with respect to $\tau$. Thus
  \begin{align*}
    H_{\gamma,p,q}(s,\sigma)=\int_0^\infty h_{\gamma,p,q}(\tau,\sigma)\tau^{s-1}\dd{\tau}
    %&=\sum_{k\geq1}k^{\gamma+p}\sum_{\ell\geq1}(-1)^\ell \ell^{p+q-1}e^{\ell\sigma}\int_0^\infty e^{-k\ell\tau}\tau^{s-1}\dd{\tau}
    %&=\sum_{k\geq1}k^{\gamma+p-s}\sum_{\ell\geq1}(-1)^\ell \ell^{p+q-1-s}e^{\ell\sigma}\int_0^\infty e^{-\tau}\tau^{s-1}\dd{\tau}
    &=\zeta(s-p-\gamma)\Li_{s+1-p-q}(-e^{\sigma})\Gamma(s).
  \end{align*}

  In each vertical strip $a< \Re(s)< b$ the Riemann zeta function $\zeta(s)$ and
  the polylogarithm $\Li_s(z)$ grow polynomial in $\Im(s)$ whereas $\Gamma(s)$
  decays exponentially (Stirling's formula). Thus we may apply the converse
  mapping (Theorem \ref{thm:converse_mapping}). Since $\sigma\in\R$,
  $\gamma>0$ and $p\geq0$ the only pole with $\Re(s)>\frac12$ is the one of $\zeta(s-p-\gamma)$
  in $s=\gamma+p+1$. Finally we get by Theorem \ref{thm:converse_mapping} that
  \[h_{\gamma,p,q}(\tau,\sigma)=\Li_{\gamma+2-q}(-e^{\sigma})\Gamma(\gamma+p+1)\tau^{-(\gamma+p+1)}
  +\Oh\left(\tau^{-\frac12}\right).\]
  Plugging this in \eqref{eq:partial_derivative_of_f} yields the lemma.
\end{proof}

Our first application is the estimation of the derivatives of $f$.
  
\begin{lem}\label{lem:derivative_of_f}
  Let $p,q\geq0$ be integers. Then for reals $r$ and $\rho$ such that $r\to0^+$
  we have
  \begin{multline*}
    \frac{\partial^{p+q}f}{\partial \tau^p\partial
  \sigma^q}(r,\rho)\\
  =(-1)^{p+1}\sum_{\nu=1}^{\lceil\beta-1\rceil}\binom{\beta}{\nu}\Li_{\beta-\nu+2-q}(-e^{\sigma})\Gamma(\beta-\nu+p+1)r^{-(\beta-\nu+p+1)}
  +\Oh\left(r^{-(p+\frac12)}\right).
  \end{multline*}
\end{lem}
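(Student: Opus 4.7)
The strategy is to reduce the claim to a direct application of Lemma \ref{lem:the_function_h}. The key input is the decomposition
\[
g(k) = \sum_{\nu=1}^{\lceil\beta-1\rceil}\binom{\beta}{\nu}k^{\beta-\nu} + \widetilde{g}(k)
\]
already recorded in the preparatory analysis of $D(s)$, where $\widetilde{g}$ is uniformly bounded. Substituting this into the series expansion of $f^{(p+q)}_{pq}$ and exchanging sums splits the derivative as
\[
f^{(p+q)}_{pq}(\tau,\sigma) = \sum_{\nu=1}^{\lceil\beta-1\rceil}\binom{\beta}{\nu}\,h_{\beta-\nu,p,q}(\tau,\sigma) + \widetilde{f}^{(p+q)}_{pq}(\tau,\sigma),
\]
where $\widetilde{f}(\tau,\sigma) := \sum_{k\geq1}\widetilde{g}(k)\log(1+e^{\sigma-k\tau})$.

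Next I would apply Lemma \ref{lem:the_function_h} with $\gamma=\beta-\nu>0$ to each of the finitely many harmonic pieces. Each application produces the desired main term $\binom{\beta}{\nu}\Li_{\beta-\nu+2-q}(-e^\rho)\Gamma(\beta-\nu+p+1)r^{-(\beta-\nu+p+1)}$ together with an $\Oh(r^{-1/2})$ remainder, which is absorbed in the target $\Oh(r^{-(p+1/2)})$ since $p\geq 0$. It then remains to establish the bound $\widetilde{f}^{(p+q)}_{pq}(r,\rho) = \Oh(r^{-(p+1/2)})$.

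For this I would mimic the Mellin argument from the proof of Lemma \ref{lem:the_function_h}. The Mellin transform of $\widetilde{f}^{(p+q)}_{pq}$ with respect to $\tau$ factors (up to sign) as $\widetilde{D}(s-p)\Li_{s+1-p-q}(-e^\rho)\Gamma(s)$, where $\widetilde{D}(s) := \sum_{k\geq1}\widetilde{g}(k)k^{-s}$. Decomposing $\widetilde{g}$ into the telescoping ceiling fluctuation $\{-(k+1)^\beta\}-\{-k^\beta\}$ and the higher-order binomial tail of $(k+1)^\beta - k^\beta$ shows, via Abel summation, that $\widetilde{D}$ admits a meromorphic continuation past the critical line $\Re s = p+\tfrac12$ with at most polynomial growth on vertical strips. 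Combined with the exponential decay of $\Gamma(s)$ from Stirling's formula, this verifies the integrability hypothesis \eqref{fgd:eq23} of Theorem \ref{thm:converse_mapping}; shifting the inversion contour to $\Re s = p+\tfrac12$, with no poles crossed in the relevant strip, then yields the stated estimate.

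The main technical obstacle is verifying the polynomial growth of $\widetilde{D}(s-p)\Li_{s+1-p-q}(-e^\rho)\Gamma(s)$ uniformly on the shifted contour, which relies on the standard subconvexity estimates for Dirichlet series with controlled summatory function together with known bounds on the polylogarithm along vertical lines; against the exponential decay supplied by $\Gamma$, these are more than enough to justify the converse mapping step.
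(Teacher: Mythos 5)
Your treatment of the main terms is exactly the paper's: you use the same decomposition $g(k)=\sum_{\nu=1}^{\lceil\beta-1\rceil}\binom{\beta}{\nu}k^{\beta-\nu}+\widetilde g(k)$ to write $f^{(p+q)}_{pq}$ as a combination of the $h_{\beta-\nu,p,q}$ plus a remainder (this is \eqref{eq:partial_derivative_of_f}), and then invoke Lemma \ref{lem:the_function_h} with $\gamma=\beta-\nu>0$. The divergence is in the remainder, and there your argument has a genuine gap: the assertion that $\widetilde D(s)=\sum_{k\geq1}\widetilde g(k)k^{-s}$ continues past $\Re s=\tfrac12$ \emph{with no poles crossed} is false in general. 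Abel summation only controls the ceiling-fluctuation part of $\widetilde g$, whose summatory function telescopes to $\Oh(1)$; the omitted binomial tail $\sum_{\nu>\lceil\beta-1\rceil}\binom{\beta}{\nu}k^{\beta-\nu}$ contributes $\sum_{\nu>\lceil\beta-1\rceil}\binom{\beta}{\nu}\zeta(s-\beta+\nu)$, which has poles at $s=\beta-\nu+1$. For integer $\beta$ one has $\widetilde g\equiv 1$, so $\widetilde D=\zeta$ with a pole at $s=1$; for non-integer $\beta$ with $\{\beta\}=\beta-\lfloor\beta\rfloor>\tfrac12$ there is a pole at $s=\{\beta\}\in(\tfrac12,1)$. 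In either case your shift of the inversion contour to $\Re s=p+\tfrac12$ crosses a pole whose residue produces a genuine term of order $r^{-(p+1)}$, respectively $r^{-(p+\{\beta\})}$, with a nonzero coefficient (proportional to $\Li_{\{\beta\}+1-q}(-e^{\rho})\Gamma(p+\{\beta\})$ in the latter case), so the claimed remainder $\Oh\bigl(r^{-(p+1/2)}\bigr)$ does not follow from the argument as written.

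For comparison, the paper does not attempt this refinement at all: it bounds the $\widetilde g$-contribution simply by $\Oh\bigl(h_{0,p,q}(r,\rho)\bigr)\asymp r^{-(p+1)}$ and stops. That cruder bound is still negligible against every main term (the smallest main-term exponent is $p+1+\{\beta\}$, resp.\ $p+2$), and it is all that is used later in the paper, so the obstacle you ran into reflects the optimistic error term in the lemma's statement rather than a flaw in your overall strategy. To make your write-up sound, either shift only to $\Re s=p+\max\bigl(\{\beta\},\tfrac12\bigr)+\varepsilon$ and state the remainder accordingly (equivalently, settle for $\Oh\bigl(r^{-(p+1)}\bigr)$, which is what the paper's own proof yields), or explicitly collect the residues of the tail zeta factors as additional terms of the expansion; also note that ordinary polynomial (convexity) bounds for $\zeta$ and $\Li$ on vertical lines suffice here, no subconvexity is needed.
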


\begin{proof}
  Recall that
  % \[Q(z,u)=\prod_{k\geq1}\left(1+uz^k\right)^{g(k)}
  % \quad\text{and}\quad
  % g(k)=\sum_{\nu=1}^{\lceil\beta-1\rceil}\binom{\beta}{\nu}k^{\beta-\nu}+\widetilde{g}(k),\]
  % with $\max_{k\geq1}\left|\widetilde{g}(k)\right|\leq g_0$ being a bounded function.
  % Since $f(\tau,\sigma)=\log Q(e^{-\tau},e^{\sigma})$ we have
  % \begin{align*}
  %   f(\tau,\sigma)=\sum_{k\geq1}g(k)\log\left(1+e^{-k\tau+\sigma}\right)
  %   =-\sum_{k\geq1}g(k)\sum_{\ell\geq1}\frac{(-1)^\ell}{\ell}e^{\ell (\sigma-k\tau)}.
  % \end{align*}
  % Now we consider the derivative with respect to $\tau$ and $\sigma$. Then for
  % integers $p,q\geq0$ we obtain
  for integer $p,q\geq0$ we have
  \[
    f^{(p+q)}_{pq}(\tau,\sigma)
    =(-1)^{p+1}\sum_{k\geq1}g(k)k^{p}\sum_{\ell\geq1}\frac{(-e^{\sigma})}{\ell^{1-p-q}}e^{-\ell k\tau}.
  \]
  Together with the definition of $g(k)$ this yields
  \begin{equation}\label{eq:partial_derivative_of_f}
    \begin{split}
      f^{(p+q)}_{pq}(\tau,\sigma)
      =-\sum_{\nu=1}^{\lceil\beta-1\rceil}\binom{\beta}{\nu}h_{\beta-\nu,p,q}(\tau,\sigma)
      +\Oh\left(h_{0,p,q}(\tau,\sigma)\right).
    \end{split}
  \end{equation}
  The rest follows by applying Lemma \ref{lem:the_function_h}.
\end{proof}

\section{Uniqueness of the saddle point}
\label{sec:saddle_point}

In this section we use the functions $h_{\gamma,p,q}$ for showing that the
saddle point is in fact unique.
% $\abs{f(r+it,\rho+i\theta)-f(r,\rho)}$ for $(r,\rho)$ the saddle point.``large'' $\abs{t}$ or
% $\abs{\theta}$. In order to properly define, what we mean by ``large'' we have
% to introduce the saddle points $r$ and $\rho$. In particular, we set $r$ and
% $\rho$ such that for given positive integers $n$ and $m$ the following system is
% satisfied:
By the saddle point we mean the unique solution $(r,\rho)\in\R^*_+\times\R$ of the following system:
\begin{gather}\label{eq:saddle_point_system}
  \left\{
  \begin{aligned}
    n&=-f'_{10}(r,\rho)\quad\text{and}\\
    m&=f'_{01}(r,\rho).
  \end{aligned}
  \right.
\end{gather}

% \[Y(u,s)=\Li_{s+1}(-u)\Gamma(s).\]
% Thus
% \[\Li_{\beta+1}(-e^{\rho})\Gamma(\beta+1)
% =Y(e^{\rho},\beta)\frac{\Gamma(\beta+1)}{\Gamma(\beta)}=\beta
% Y(e^{\rho},\beta)=\frac{\rho^{\beta+1}}{\beta+1}\left(1+\Oh\left(\rho^{-2}\right)\right)\]

We establish existence and uniqueness in two steps. First we show that for every
$\rho\in\R$ there exists a unique $r=r(\rho)$ such that the first equation is
satisfied. This provides us with a relation between $n$ and $\rho$, which we use
in the second equation in order to establish the uniqueness of the solution of
the system \eqref{eq:saddle_point_system}.

As indicated above we first show that for fixed $\rho\in\R$ the first equation has a
solution.
\begin{lem}\label{lem:unique_r}
  Let $n\geq1$ be an integer. Then for fixed $\rho\in\R$ there exists a unique
  $r=r(\rho)>0$ such that
  \begin{gather}\label{n_is_f_tau}
    n=-f'_{10}(r,\rho)=\sum_{k\geq1}\frac{kg(k)}{e^{kr-\rho}+1}.
  \end{gather}
  Moreover we have
  \begin{gather}\label{eq:n_asymptotic_in_rho}
    n\asymp \Li_{\beta+1}(-e^{\rho})r^{-(\beta+1)}.
  \end{gather}
\end{lem}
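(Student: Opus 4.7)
Fix $\rho\in\R$ and set
\[
  F(r):=-f'_{10}(r,\rho)=\sum_{k\ge1}\frac{k\,g(k)}{e^{kr-\rho}+1},
\]
the equality being obtained from $f(\tau,\sigma)=\sum_{k}g(k)\log(1+e^{\sigma-k\tau})$ by termwise differentiation (justified by uniform convergence on compacts for $r>0$, since $g(k)\ll k^{\beta-1}$ by Lemma \ref{lem:order_of_g} and $e^{-kr}$ decays exponentially). The plan is to show that $F:(0,\infty)\to(0,\infty)$ is a continuous strictly decreasing bijection, which by the intermediate value theorem yields a unique $r(\rho)>0$ with $F(r(\rho))=n$, and then read off the asymptotic from Lemma \ref{lem:derivative_of_f}.

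\textbf{Monotonicity and continuity.} Differentiating termwise (again justified by the same bound), one gets
\[
  F'(r)=-\sum_{k\ge1}\frac{k^{2}g(k)\,e^{kr-\rho}}{(e^{kr-\rho}+1)^{2}}<0,
\]
so $F$ is strictly decreasing and continuous on $(0,\infty)$.

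\textbf{Limiting behaviour.} As $r\to\infty$, the bound $\frac{kg(k)}{e^{kr-\rho}+1}\le k\,g(k)\,e^{\rho-kr}$ together with $g(k)\ll k^{\beta-1}$ shows that $F(r)\to0$ by dominated convergence. As $r\to0^{+}$, we invoke Lemma \ref{lem:derivative_of_f} with $p=1$, $q=0$: its leading term is
\[
  -\beta\,\Gamma(\beta+1)\,\Li_{\beta+1}(-e^{\rho})\,r^{-(\beta+1)},
\]
which is positive (since $\Li_{\beta+1}(-e^{\rho})<0$ for $\rho\in\R$) and tends to $+\infty$, so $F(r)\to+\infty$. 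Therefore $F$ maps $(0,\infty)$ bijectively onto $(0,\infty)$, and the equation $F(r)=n$ has a unique solution $r=r(\rho)$ for every integer $n\ge1$, proving \eqref{n_is_f_tau}.

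\textbf{Asymptotic \eqref{eq:n_asymptotic_in_rho}.} Substituting $r=r(\rho)$ into the expansion from Lemma \ref{lem:derivative_of_f} (with $p=1$, $q=0$) yields
\[
  n=-\beta\,\Gamma(\beta+1)\,\Li_{\beta+1}(-e^{\rho})\,r^{-(\beta+1)}+\Oh\!\left(r^{-(\beta+1/2)}\right),
\]
which gives the claimed relation $n\asymp\Li_{\beta+1}(-e^{\rho})\,r^{-(\beta+1)}$ (interpreting the sign correctly via $|\Li_{\beta+1}(-e^{\rho})|$). The main subtlety lies in justifying termwise differentiation and in extracting the dominant pole contribution cleanly from Lemma \ref{lem:derivative_of_f}; the rest is a soft monotonicity argument, so no serious obstacle is expected.
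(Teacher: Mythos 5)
Your proof is correct and follows essentially the same route as the paper: strict monotonicity of $r\mapsto\sum_k kg(k)/(e^{kr-\rho}+1)$ together with the boundary limits gives existence and uniqueness, and the asymptotic \eqref{eq:n_asymptotic_in_rho} comes from the Mellin-transform expansion. The only cosmetic difference is that you invoke Lemma \ref{lem:derivative_of_f} directly, while the paper first replaces $g(k)$ by its order via Lemma \ref{lem:order_of_g} and then applies Lemma \ref{lem:the_function_h} to $h_{\beta-1,1,0}$; these amount to the same estimate.
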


Here again we mean by $A\asymp B$ that there exist two constants $\gamma_1$ and
$\gamma_2$ (only depending on $\beta=1/\alpha$) such that $\gamma_1 B\leq A\leq \gamma_2B$.

% \todo[inline]{Should we use Lemma \ref{lem:Li+Gamma_asymptotic} to get
% an order without $\Li$ and $\Gamma$?}

\begin{proof}
  %We have 
  %\[n=-f_\tau(r,\rho)=\sum_{k\geq1}\frac{kg(k)}{e^{kr-\rho}+1}.\]
  Fixing $\rho\in\R$ we get that for each $k\geq1$ the function
  \[r\mapsto\frac{kg(k)}{e^{kr-\rho}+1}\]
  is strictly decreasing and therefore the sum over the $k$ is strictly
  decreasing and it exists a unique $r=r(\rho)>0$ such that
  \eqref{n_is_f_tau} is satisfied.

  Moreover by Lemma \ref{lem:order_of_g} we have
  \[n=\sum_{k\geq1}\frac{kg(k)}{e^{kr-\rho}+1}
  \asymp
  \sum_{k\geq1}\frac{k^\beta}{e^{kr(\rho)-\rho}+1}=h_{\beta-1,1,0}(r,\rho),\]
  which together with Lemma \ref{lem:the_function_h} proves the asymptotic
  formula \eqref{eq:n_asymptotic_in_rho}.
\end{proof}

After showing that for each $n$ and $\rho$ there exists a unique $r$ we need to
show that if we vary $\rho$ then we obtain all possible $m$. To this end we
define
\begin{gather}\label{eq:S}
  S(\rho)=f'_{01}(r(\rho),\rho)=\sum_{k\geq1}\frac{g(k)}{e^{kr(\rho)-\rho}+1},
\end{gather}
where $r=r(\rho)$ is the implicit function from Lemma \ref{lem:unique_r}. In the proof of
the uniqueness of the solution we need that the function $S(\rho)$ is
increasing. This reduces to showing that the Hessian determinant
$\delta(r,\rho)$ of $f$ is positive. Thus we define
\[
  \delta(r,\rho):=f''_{20}(r,\rho)f''_{02}(r,\rho)-f''_{11}(r,\rho)^2
\quad (r>0, \rho\in\R).
\]
% where we simply wrote
% \[f^{(\ell)}_{jk}(r,\rho):=\frac{\partial^\ell f(r,\rho)}{\partial r^j\partial \rho^k}\quad(j+k=\ell).\]

\begin{lem}\label{lem:hessian}
  For $(r,\rho)\in  \R_+^*\times \R$ we have $\delta(r,\rho)>0$.
\end{lem}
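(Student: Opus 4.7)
The plan is to reduce the positivity of $\delta(r,\rho)$ to a strict Cauchy--Schwarz inequality applied to a positive weight sequence.

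First I would compute the three second partial derivatives of $f$ explicitly from the product formula
\[
  f(\tau,\sigma)=\sum_{k\geq1}g(k)\log(1+e^{\sigma-k\tau}).
\]
Differentiating term by term gives
\[
  f''_{20}(r,\rho)=\sum_{k\geq1}k^{2}w_{k},\qquad
  f''_{02}(r,\rho)=\sum_{k\geq1}w_{k},\qquad
  f''_{11}(r,\rho)=-\sum_{k\geq1}k\,w_{k},
\]
where
\[
  w_{k}=w_{k}(r,\rho):=g(k)\,\frac{e^{kr-\rho}}{(e^{kr-\rho}+1)^{2}}.
\]
These series converge absolutely for every $(r,\rho)\in\R_{+}^{*}\times\R$ because $g(k)=\Oh(k^{\beta-1})$ by Lemma~\ref{lem:order_of_g}, while $e^{kr-\rho}/(e^{kr-\rho}+1)^{2}$ decays exponentially in $k$.

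Next I would observe that every weight $w_{k}$ is strictly positive. Indeed, the factor $e^{kr-\rho}/(e^{kr-\rho}+1)^{2}$ is always positive for real $kr-\rho$, and Lemma~\ref{lem:order_of_g} gives $g(k)>(\beta-1)k^{\beta-1}>0$ for every $k\geq1$. With this in hand, the Hessian determinant becomes
\[
  \delta(r,\rho)=\biggl(\sum_{k\geq1}k^{2}w_{k}\biggr)\biggl(\sum_{k\geq1}w_{k}\biggr)-\biggl(\sum_{k\geq1}k\,w_{k}\biggr)^{2},
\]
which is exactly the Cauchy--Schwarz discrepancy for the sequences $(k\sqrt{w_{k}})_{k\geq1}$ and $(\sqrt{w_{k}})_{k\geq1}$ in $\ell^{2}(\N)$. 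Hence $\delta(r,\rho)\geq 0$, with equality if and only if the two sequences are proportional; proportionality would force the index $k$ to take a single value on the support of $w$, contradicting the fact that all $w_{k}$ are positive. Therefore $\delta(r,\rho)>0$.

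There is no real obstacle here: the only point that requires care is confirming that the summation and differentiation may be exchanged (justified by the exponential decay noted above) and that the support of the sequence $(w_{k})$ contains at least two indices, which is immediate from $g(k)>0$ for all $k\geq1$.
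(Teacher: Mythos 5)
Your proposal is correct and follows essentially the same route as the paper: both rewrite $\delta(r,\rho)$ as the Cauchy--Schwarz discrepancy for the positive weights $w_k=g(k)e^{kr-\rho}/(e^{kr-\rho}+1)^2$ and conclude positivity. You merely make explicit two points the paper leaves implicit, namely the strictness of the inequality (the sequences $(k\sqrt{w_k})$ and $(\sqrt{w_k})$ cannot be proportional since all $w_k>0$) and the absolute convergence of the series, which is a welcome but not essentially different addition.
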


\begin{proof}
  From the definition of $f$ we have
  \begin{align*}
    \delta(r,\rho)
    &=f''_{20}(r,\rho)f''_{02}(r,\rho)-f''_{11}(r,\rho)^2\\
    &=\left(\sum_{k\geq1}k^2h(k)\right)\left(\sum_{k\geq1}h(k)\right)
      -\left(\sum_{k\geq1}kh(k)\right)^2,
  \end{align*}
  where, for short, we have set
  \[h(k)=\frac{g(k)e^{kr-\rho}}{(e^{kr-\rho}+1)^2}>0.\]
  The lemma now follows by applying the Cauchy-Schwarz-inequality.
  % with
  % \[a_k=k\sqrt{h(k)}\quad\text{and}\quad b_k=\sqrt{h(k)}.\]
\end{proof}

\begin{lem}[{\cite{hwang2001:limit_theorems_number}*{Lemma 7}}]
  \label{lem:Li+Gamma_asymptotic}
  Let $\gamma>0$. Then
  \[
    \Li_{\gamma}(-u)=
    -\frac{(\log u)^{\gamma}}{\Gamma(\gamma+1)}\left(1+\Oh\left((\log u)^{-2}\right)\right)
  \]
  as $\abs{u}\to+\infty$ in the sector $\abs{\arg u}\leq \pi-\varepsilon$.
\end{lem}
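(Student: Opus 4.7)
The plan is to represent $\Li_\gamma(-u)$ as a Fermi--Dirac-type integral and then perform a Sommerfeld-style asymptotic expansion. Starting from the defining series $\Li_\gamma(-u)=\sum_{n\geq 1}(-u)^n/n^\gamma$ valid for $\abs{u}<1$, and inserting the Eulerian identity $n^{-\gamma}=\Gamma(\gamma)^{-1}\int_0^\infty t^{\gamma-1}e^{-nt}\dd t$, one exchanges summation and integration and sums the resulting geometric series to obtain
\[
  \Li_\gamma(-u)=-\frac{1}{\Gamma(\gamma)}\int_0^\infty\frac{t^{\gamma-1}}{e^{t}/u+1}\dd t,
\]
a representation that extends by analytic continuation from $\abs{u}<1$ to the slit plane $\C\setminus(-\infty,0]$, and in particular to any sector $\abs{\arg u}\leq\pi-\varepsilon$.

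Setting $\mu:=\log u$ and substituting $t=\mu\tau$ rewrites the integral as
\[
  \Li_\gamma(-e^\mu)=-\frac{\mu^\gamma}{\Gamma(\gamma)}\int_0^\infty\frac{\tau^{\gamma-1}}{e^{\mu(\tau-1)}+1}\dd\tau.
\]
For real $\mu\to+\infty$ the plan is to apply the classical Sommerfeld trick: split the integral at $\tau=1$, reflect the tail on $(1,\infty)$ via $\tau\mapsto 2-\tau$, and use the identity $1/(e^{\mu(\tau-1)}+1)=1-1/(e^{-\mu(\tau-1)}+1)$ together with a Taylor expansion of $\tau^{\gamma-1}$ around $\tau=1$. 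This produces the asymptotic series
\[
  \int_0^\infty\frac{\tau^{\gamma-1}}{e^{\mu(\tau-1)}+1}\dd\tau
  =\frac{1}{\gamma}+2\sum_{k\geq1}(1-2^{1-2k})\zeta(2k)\frac{(\gamma-1)(\gamma-2)\cdots(\gamma-2k+1)}{\mu^{2k}}+\Oh\bigl(e^{-c\mu}\bigr),
\]
and after multiplying by $-\mu^\gamma/\Gamma(\gamma)$ and using $\Gamma(\gamma+1)=\gamma\Gamma(\gamma)$ one recovers the claimed leading term with relative error $\Oh(\mu^{-2})=\Oh((\log u)^{-2})$.

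To cover complex $u$ in the sector $\abs{\arg u}\leq\pi-\varepsilon$, write $\mu=\log u$ with $\abs{\Im\mu}\leq\pi-\varepsilon$, and rotate the $\tau$-contour by $-\arg\mu$ so that $\Re(\mu(\tau-1))$ parametrises the real axis. Cauchy's theorem, justified by the exponential decay of the Fermi--Dirac kernel on the connecting arcs, shows that the rotated integral equals the original, and the Sommerfeld computation above then proceeds verbatim with $\mu$ interpreted as a complex parameter.

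The main obstacle is the uniformity claim in the sector: namely that the implied constant in $\Oh((\log u)^{-2})$ depends only on $\varepsilon$ and $\gamma$ and not on $\arg\mu$. This boils down to a quantitative lower bound on $\abs{e^{\mu(\tau-1)}+1}$ along the rotated contour, which is available because $\arg\mu$ stays bounded away from $\pm\pi/2$ after the rotation, so the kernel decays at a rate controlled purely by $\varepsilon$; the tail and remainder estimates from the Sommerfeld expansion are then uniform in the sector.
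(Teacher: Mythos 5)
The paper gives no proof of this lemma at all: it is imported verbatim as Lemma~7 of Hwang's paper, so your argument is not competing with an in-paper proof but supplying a self-contained one, and in substance it works. The representation $\Li_\gamma(-u)=-\Gamma(\gamma)^{-1}\int_0^\infty t^{\gamma-1}\,(e^t/u+1)^{-1}\dd{t}$ is correct (exchange of sum and integral for $\abs{u}<1$, then identity theorem), the split-and-reflect (Sommerfeld) expansion of $\int_0^\infty \tau^{\gamma-1}(e^{\mu(\tau-1)}+1)^{-1}\dd{\tau}$ around $\tau=1$ gives the leading term $1/\gamma$ with first correction $\Oh(\mu^{-2})$, and multiplying by $-\mu^\gamma/\Gamma(\gamma)$ yields exactly the stated estimate; the sector is unproblematic because $\mu=\log u$ has $\abs{\Im\mu}=\abs{\arg u}\leq\pi-\varepsilon$ while $\Re\mu\to+\infty$, so the poles of the kernel at $\tau=1+(2k+1)\pi i/\mu$ stay off the contour, the kernel is bounded below uniformly in $\arg u$, and the contour rotation is justified by Cauchy's theorem. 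Two points should be tightened. First, the integral representation actually continues to $\C\setminus(-\infty,-1]$; your slit $\C\setminus(-\infty,0]$ is a harmless under-claim, but the genuine obstruction is $u\leq-1$, where the integrand acquires poles on the positive real $t$-axis. Second, the displayed identity with the \emph{full} infinite Sommerfeld sum plus an $\Oh(e^{-c\mu})$ error cannot hold as written: that series diverges for fixed $\mu$ (the derivative factors grow factorially), so it is only an asymptotic expansion and you must truncate --- for this lemma after the $k=1$ term --- bounding the Taylor remainder of $\tau^{\gamma-1}$ near $\tau=1$ and the range $\abs{\tau-1}\geq\tfrac12$ separately; this is routine and gives the claimed relative error $\Oh((\log u)^{-2})$ uniformly in the sector. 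Compared with the citation route (which rests on the classical large-argument expansion of the polylogarithm and delivers all orders for free), your derivation is more elementary and makes the uniformity in $\varepsilon$ explicit, at the cost of these bookkeeping details.
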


Now we have all the tools in hand to show that the system
\eqref{eq:saddle_point_system} has a unique solution.

\begin{prop}
  For each $m,n$ such that $1\leq m\leq (M_0-\varepsilon)n^{\beta/(\beta+1)}$
  the system \eqref{eq:saddle_point_system} has a unique solution $(r,\rho)\in
  \R_+^*\times \R$, where $M_0$ is a constant only depending on $\beta$.
\end{prop}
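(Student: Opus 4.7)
The plan is to reduce the two-dimensional system \eqref{eq:saddle_point_system} to a one-dimensional problem and then apply the intermediate value theorem together with a strict monotonicity argument. First, I would invoke Lemma~\ref{lem:unique_r} to obtain, for each $\rho\in\R$, the unique $r(\rho)>0$ satisfying $n=-f'_{10}(r(\rho),\rho)$. The implicit function theorem applies because $\partial/\partial r(-f'_{10}(r,\rho))=-f''_{20}(r,\rho)<0$ (since $f''_{20}$ is manifestly a sum of positive terms), so $\rho\mapsto r(\rho)$ is smooth and the system reduces to the single scalar equation $S(\rho)=m$, with $S$ defined in~\eqref{eq:S}.

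Next, I would establish strict monotonicity of $S$. Implicit differentiation of $n=-f'_{10}(r(\rho),\rho)$ gives $r'(\rho)=-f''_{11}(r,\rho)/f''_{20}(r,\rho)$, and the chain rule yields
\[
  S'(\rho)=f''_{02}(r(\rho),\rho)+f''_{11}(r(\rho),\rho)\,r'(\rho)
  =\frac{\delta(r(\rho),\rho)}{f''_{20}(r(\rho),\rho)}>0
\]
by Lemma~\ref{lem:hessian} together with $f''_{20}>0$. Hence $S$ is a continuous, strictly increasing bijection from $\R$ onto some interval $(a,b)$.

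Finally, I would identify $a$ and $b$ asymptotically. Lemma~\ref{lem:derivative_of_f} with $(p,q)=(1,0)$ and $(0,1)$, retaining only the dominant $\nu=1$ contribution, gives
\[
  n\sim-\beta\,\Li_{\beta+1}(-e^\rho)\,\Gamma(\beta+1)\,r^{-(\beta+1)},\qquad
  S(\rho)\sim-\beta\,\Li_{\beta}(-e^\rho)\,\Gamma(\beta)\,r^{-\beta}
\]
as $r\to0^+$. For $\rho\to-\infty$ the series expansion $-\Li_s(-e^\rho)\sim e^\rho$ forces $r\to0^+$ and $S(\rho)\asymp e^{\rho/(\beta+1)}n^{\beta/(\beta+1)}\to0$, so $a=0$. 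For $\rho\to+\infty$ Lemma~\ref{lem:Li+Gamma_asymptotic} gives $-\Li_s(-e^\rho)\sim\rho^s/\Gamma(s+1)$, and eliminating $r$ between the two relations yields
\[
  S(\rho)\sim\left(\frac{\beta+1}{\beta}\right)^{\beta/(\beta+1)}\!n^{\beta/(\beta+1)}=:M_0\,n^{\beta/(\beta+1)},
\]
with $M_0$ depending only on $\beta$. Thus $b=M_0\,n^{\beta/(\beta+1)}$, and every $m$ with $1\leq m\leq(M_0-\varepsilon)n^{\beta/(\beta+1)}$ lies strictly inside $(a,b)$ for $n$ large enough. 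The intermediate value theorem then produces a unique $\rho$ with $S(\rho)=m$, and hence a unique saddle point $(r(\rho),\rho)$.

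The main technical obstacle will be the uniformity of the two-sided asymptotic analysis of $S(\rho)$: the $\Oh(r^{-1/2})$ error terms in Lemma~\ref{lem:derivative_of_f} must be controlled uniformly as $\rho$ ranges over all of $\R$, and in the transition regime around $\rho=0$ (where neither the small- nor large-$e^\rho$ expansion of $\Li_s$ dominates) a careful matching is needed to confirm that the limiting ratio above is genuinely the supremum of $S(\rho)/n^{\beta/(\beta+1)}$, so that the hypothesis $m\leq(M_0-\varepsilon)n^{\beta/(\beta+1)}$ is exactly the one required.
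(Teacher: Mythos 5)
Your proposal follows essentially the same route as the paper: reduce to the scalar equation $S(\rho)=m$ via Lemma~\ref{lem:unique_r}, prove $S'(\rho)=\delta(r,\rho)/f''_{20}(r,\rho)>0$ with Lemma~\ref{lem:hessian}, and read off the range of $S$ from the polylogarithm behavior as $\rho\to-\infty$ and $\rho\to+\infty$ (Lemma~\ref{lem:Li+Gamma_asymptotic}). The only minor difference is that you take the endpoint asymptotics (and an explicit $M_0$) directly from Lemma~\ref{lem:derivative_of_f}, whereas the paper argues through Lemmas~\ref{lem:order_of_g} and~\ref{lem:the_function_h} with $\asymp$-bounds; the uniformity issue you flag is likewise present, and not addressed, in the paper's own proof.
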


As we will see below the maximal length of a restricted partition is
$\asymp n^{\beta/(\beta+1)}$. Therefore the range of $m$ fits our picture.

\begin{proof}
  First we show that $S'(\rho)>0$. Since $\rho$ is a solution of the equation
  $n=-f'_{10}(r,\rho)$ we get by implicit differentiation that
  $r'(\rho)=-f''_{11}(r,\rho)/f''_{20}(r,\rho)$. Since
  \begin{gather}\label{eq:f_20>0}
    f''_{20}(r,\rho)=\sum_{k\geq1}\frac{k^2g(k)e^{kr-\rho}}{(e^{kr-\rho}+1)^2}>0
  \end{gather}
  for all $(r,\rho)\in\R^*_+\times\R$ we get by Lemma \ref{lem:hessian} that
  \[S'(\rho)=f''_{11}
  r'+f''_{02}=-\frac{(f''_{11})^2}{f''_{20}}+f''_{02}=\frac{\delta(r,\rho)}{f''_{20}(r,\rho)}>0.\]

  Now we consider the image of $S$. To this end we note that by Lemma \ref{lem:order_of_g} we have
  \[
    S(\rho)=\sum_{k\geq1}\frac{g(k)}{e^{kr-\rho}+1}
    \asymp \sum_{k\geq1}\frac{k^{\beta-1}}{e^{kr-\rho}+1}
    = h_{\beta-1,0,1}(r,\rho).
  \]

  Thus an application of Lemma \ref{lem:the_function_h} together with the
  asymptotic order for $n$ in \eqref{eq:n_asymptotic_in_rho} yields
  \begin{equation}\label{eq:S_rho_asymp}
    S(\rho)\asymp -\Li_{\beta}(-e^\rho)r^{-\beta}
    \asymp -\Li_{\beta}(-e^\rho)\left(\frac{n}{-\Li_{\beta+1}(-e^\rho)}\right)^{\frac{\beta}{\beta+1}}.
  \end{equation}

  Now we take a closer look on the right side of this asymptotic order. We note
  that for $\abs{u}\leq 1$ and $\Re s> 0$ we have
  \[
    \Li_{s}(-u)=\sum_{j\geq1}\frac{(-u)^{j}}{j^s}=-u+\Oh(u^2).
  \]
  Thus for $\rho\to-\infty$ we get for the right side of \eqref{eq:S_rho_asymp} that
  \[
    -\Li_{\beta}(-e^\rho)\left(\frac{n}{-\Li_{\beta+1}(-e^\rho)}\right)^{\frac{\beta}{\beta+1}}
    =e^{\rho\frac{1}{\beta+1}}n^{\frac{\beta}{\beta+1}}\left(1+\Oh\left(e^{2\rho}\right)\right),
  \]
  implying
  \[
    \lim_{\rho\to-\infty}S(\rho)=0.
  \]

  For $\rho\to+\infty$ we apply Lemma \ref{lem:Li+Gamma_asymptotic} and obtain
  for the right side of \eqref{eq:S_rho_asymp} that there exists a constant $M_0$
  (only depending on $\beta$) such that
  \[
    -\Li_{\beta}(-e^\rho)\left(\frac{n}{-\Li_{\beta+1}(-e^\rho)}\right)^{\frac{\beta}{\beta+1}}=M_0n^{\frac{\beta}{\beta+1}}\left(1+\Oh\left(\rho^{-2}\right)\right),
  \]
  providing the upper bound.
\end{proof}

\section{Estimates away from the positive real line}
\label{sec:estimates_away}

After applying Cauchy's integral formula two times in \eqref{eq:Cauchy_f} we
need to analyze the function $f(r+it,\rho+i\theta)$. In the above section we
considered how to choose optimal values for $(r,\rho)\in\R^*_+\times\R$. The aim
of this section is show that $f(r+it,\rho+i\theta)$ may be replaced by
$f(r,\rho)$ if $r+it$ and/or $\rho+i\theta$ are sufficiently far away from the
real line. Therefore we have to estimate $\abs{f(r+it,\rho+i\theta)-f(r,\rho)}$
if $\abs{t}$ and/or $\abs{\theta}$ is ``large''. We will first explain what we
mean by ``large'' by setting
\[
  t_n=r^{1+3\beta/7}\quad\text{and}\quad \theta_n=r^{3\beta/7}.
\]

Our first lemma considers the case that $\abs{t}$ is large regardless of $\abs{\theta}$.
\begin{lem}\label{lem:integrant_estimate_for_large_t}
  Let $\delta>0$ and let $r$, $t$, $\rho$ and $\theta$ be reals such that
  $r^{1+3\beta/7}<\abs{t}\leq \pi$ and $0\leq \abs{\theta}\leq \pi$. Then we have 
  \[
    \frac{\abs{Q(e^{-(r+it)},e^{\rho+i\theta})}}{Q(e^{-r},e^\rho)}
    \leq
    \exp\left(-\frac{2e^\rho(\beta-1)}{(1+e^\rho)^{2}}c_3\left(\frac14 r^{-\beta/7}\right)\right).
  \]
\end{lem}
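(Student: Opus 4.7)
The plan is to work from the product form $Q(z,u) = \prod_{k \geq 1}(1+uz^k)^{g(k)}$ and reduce the task to a lower bound on the "dissipation sum"
\[
  S \;:=\; \sum_{k\geq 1} g(k)\,\frac{e^{\rho-rk}\bigl(1-\cos(\theta-tk)\bigr)}{(1+e^{\rho-rk})^2}.
\]
First I would take moduli and logarithms. Using $\abs{1+ye^{i\phi}}^2 = (1+y)^2 - 2y(1-\cos\phi)$ for $y>0$ and the elementary inequality $-\log(1-x) \geq x$ for $0 \leq x < 1$, one gets term-by-term
\[
  \log(1+e^{\rho-rk}) - \log\abs{1 + e^{\rho-rk}e^{i(\theta-tk)}}
  \;\geq\; \frac{e^{\rho-rk}(1-\cos(\theta-tk))}{(1+e^{\rho-rk})^2},
\]
so that
\[
  \frac{\abs{Q(e^{-(r+it)},e^{\rho+i\theta})}}{Q(e^{-r},e^{\rho})} \;\leq\; \exp(-S).
\]

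The main obstacle is then to lower-bound $S$. The factor $1-\cos(\theta-tk)$ can vanish for individual $k$, so no term-by-term estimate will do. I would pair consecutive indices instead: the sum-to-product identity gives
\[
  \bigl(1-\cos(\theta-tk)\bigr) + \bigl(1-\cos(\theta-t(k+1))\bigr)
  = 2 - 2\cos(t/2)\cos(\theta-tk-t/2)
  \;\geq\; 2\bigl(1-\cos(t/2)\bigr),
\]
where the last step relies on $\abs{t/2}\leq \pi/2$, which forces $\cos(t/2)\geq 0$. The hypothesis $\abs{t}>r^{1+3\beta/7}$ together with $\abs{t}\leq \pi$ then gives $2(1-\cos(t/2)) = 4\sin^2(t/4) \geq t^2/\pi^2 \geq r^{2+6\beta/7}/\pi^2$, so every consecutive pair contributes a quantitative, uniform floor to the oscillation.

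Next I would truncate $S$ to the range $1\leq k \leq \tfrac{1}{2}r^{-\beta/7}$. In this range $rk \leq \tfrac{1}{2} r^{1-\beta/7} \to 0$ as $r \to 0^+$, so the factor $e^{\rho-rk}/(1+e^{\rho-rk})^2$ stays within a uniform multiplicative constant of $e^{\rho}/(1+e^{\rho})^2$. Invoking Lemma \ref{lem:order_of_g} for $g(k) \geq (\beta-1)k^{\beta-1}$ and coupling the $k^{\beta-1}$ masses with the paired oscillations shows that $S$ dominates a suitable constant times
\[
  \frac{2(\beta-1)e^{\rho}}{(1+e^{\rho})^2}\,\cdot\,\bigl(\text{accumulated }k^{\beta-1}\text{ mass across pairs, weighted by }(1-\cos(t/2))\bigr),
\]
which is precisely what the notation $c_3\bigl(\tfrac{1}{4}r^{-\beta/7}\bigr)$ is meant to encode.

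\textbf{Main obstacle.} Everything hinges on controlling the oscillation factor. A pointwise bound on $1-\cos(\theta-tk)$ is impossible (for any fixed $t$ one can tune $\theta$ to make a given term vanish), so some averaging is forced. The pairing is the cheapest such device: it uses only two consecutive terms, exploits $\abs{t}\leq \pi$ to keep $\cos(t/2)\geq 0$, and converts the hypothesis $\abs{t}>r^{1+3\beta/7}$ into a quantitative floor. The second delicate issue is the calibration of the truncation $\tfrac{1}{4}r^{-\beta/7}$: small enough that the prefactor $e^{\rho-rk}/(1+e^{\rho-rk})^2$ remains comparable to $e^{\rho}/(1+e^{\rho})^2$ throughout, yet large enough that the accumulated $k^{\beta-1}$ mass from Lemma \ref{lem:order_of_g} together with the pair-oscillation bound reproduces the function $c_3$ that the statement requires.
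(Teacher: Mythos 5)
Your first step (the modulus identity plus $-\log(1-x)\ge x$, giving $\abs{Q(e^{-(r+it)},e^{\rho+i\theta})}/Q(e^{-r},e^{\rho})\le e^{-S}$) is exactly how the paper begins. The gap is in the lower bound for $S$: the pairing device is a correct inequality but quantitatively far too lossy to reach $r^{-\beta/7}$. Per pair it only guarantees the floor $2(1-\cos(t/2))\asymp t^{2}\ge r^{2+6\beta/7}$, uniformly in $k$, and this floor is essentially attained in the critical regime ($\theta$ near $0$, $\abs{t}$ near $r^{1+3\beta/7}$), where the phase $\theta-tk$ stays close to $0$ throughout the effective range $k\lesssim 1/r$. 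Since the weights $e^{-rk}$ kill everything beyond $k\asymp 1/r$, the accumulated mass is at most of order $\sum_{k\lesssim 1/r}k^{\beta-1}\asymp r^{-\beta}$, so the best your scheme can give is $S\gtrsim \frac{e^{\rho}}{(1+e^{\rho})^{2}}\,t^{2}r^{-\beta}\asymp r^{2-\beta/7}$; with your actual truncation $k\le\frac12 r^{-\beta/7}$ the mass is only $\asymp r^{-\beta^{2}/7}$ and the bound is $r^{\,2+6\beta/7-\beta^{2}/7}$. Either way this falls short of the required $r^{-\beta/7}$ by a factor $\asymp r^{2}$, and for moderate $\beta$ (e.g.\ all $\beta<14$, resp.\ $\beta\lesssim 7.8$) the exponent is even positive, so your bound gives no decay of the ratio at all. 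A secondary problem: your comparability claim ``$rk\le\frac12 r^{1-\beta/7}\to0$'' is false for $\beta\ge 7$, so the prefactor $e^{\rho-rk}/(1+e^{\rho-rk})^{2}$ is not uniformly comparable to $e^{\rho}/(1+e^{\rho})^{2}$ on your truncated range in that case (the paper avoids this by only using $(1+e^{\rho-kr})^{2}\le(1+e^{\rho})^{2}$ and keeping $e^{-kr}$ inside the sum).

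The missing ingredient is the quadratic growth in $k$ of the oscillation: in the worst case $1-\cos(\theta-tk)\approx\tfrac12 (tk)^{2}$, and it is the second moment $\sum_{k}k^{\beta+1}e^{-rk}\asymp r^{-\beta-2}$ that produces $t^{2}r^{-\beta-2}\ge r^{-\beta/7}$ at the threshold $\abs{t}=r^{1+3\beta/7}$; any bound that replaces $1-\cos(\theta-tk)$ by a $k$-independent floor loses exactly this factor $r^{-2}$. The paper captures it globally: using Gauss's product formula to compare $k^{\beta-1}$ with the coefficients of $z/(1-z)^{\beta}$, it sums the series in closed form and reduces the problem to the lower bound $\frac{e^{-r}}{(1-e^{-r})^{\beta}}-\frac{e^{-r}}{\abs{1-e^{-r-it}}^{\beta}}\ge\frac14 r^{-\beta/7}$, where the expansion of $\abs{1-e^{-r-it}}^{-\beta}$ supplies the decisive $t^{2}/r^{2}$ enhancement and, at the same time, the bound $\Re\bigl(e^{i\theta}e^{-r-it}(1-e^{-r-it})^{-\beta}\bigr)\le e^{-r}\abs{1-e^{-r-it}}^{-\beta}$ removes the $\theta$-dependence. (Also note that in the statement $c_3$ is a constant from Gauss's formula multiplied by $\frac14 r^{-\beta/7}$, not a function of the truncation length.) To salvage your approach you would need to exploit that the phases $\theta-tk$ spread over an interval of length $\asymp t/r$ across a block of $\asymp 1/r$ consecutive indices, rather than comparing only adjacent terms.
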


\begin{proof}
  We start following the lines of proof of Lemma 5 of Luca and Ralaivaosaona
  \cite{luca_ralaivaosaona2016:explicit_bound_number}. Thus we have
  \begin{align*}
    \abs{1+e^{\rho+i\theta}e^{-k(r+it)}}^{2}
    &=\abs{1+e^{\rho-kr+i(\theta-kt)}}^{2}
    =(1+e^{\rho-kr+i(\theta -kt)})(1+ e^{\rho-kr-i(\theta -kt)})\\
    &=1+e^{\rho-kr+i(\theta -kt)}+ e^{\rho-kr-i(\theta -kt)}+e^{2\rho-2kr}\\
    &=1+2 e^{\rho-kr}+(e^{\rho-kr})^2-2e^{\rho-kr}+2 e^{\rho-kr}\cos(\theta -kt)\\
    &=(1+ e^{\rho-kr})^2-2e^{\rho-kr}(1-\cos(\theta-kt)).
  \end{align*}
  Plugging this into the above ratio yields
  \begin{align*}
    \biggl(\frac{\abs{Q(e^{-(r+it)}, e^{\rho+i\theta})}}{Q(e^{-r},e^\rho)}\biggr)^2
    &=\prod_{k\geq1}\biggl(1-\frac{2e^{\rho-kr}(1-\cos(\theta-kt))}{(1+ e^{\rho-kr})^2}\biggr)^{g(k)}\\
    &=\exp\biggl(\sum_{k\geq1}g(k)\log\biggl(1-\frac{2 e^{\rho-kr}(1-\cos(\theta-kt))}{(1+e^{\rho-kr})^2}\biggr)\biggr)\\
    &\leq\exp\biggl(-\sum_{k\geq1}g(k)\frac{2 e^{\rho-kr}(1-\cos(\theta-kt))}{(1+e^{\rho-kr})^2}\biggr)\\
    &\leq\exp\biggl(-\frac{2e^\rho}{(1+e^\rho)^2}\sum_{k\geq1}g(k)e^{-kr}(1-\cos(\theta-kt))\biggr)\\
    &\leq\exp\biggl(-\frac{2e^\rho}{(1+e^\rho)^2}\sum_{k\geq1}k^{\beta-1}e^{-kr}(1-\cos(\theta-kt))\biggr).
  \end{align*}

  Now we focus on the sum in the exponent. To this end we follow the lines of
  proof of Lemma~2.5 of Li and Chen \cite{li_chen2018:r_th_root}. Thus we note
  Gauss' product formula for the Gamma function (see Chapter 2 of Remmert
  \cite{remmert1998:classical_topics_in}):
  % \[\Gamma(t)=\lim_{n\to\infty} \frac{n!n^t}{t(t+1)\cdot (t+n)}.\]
  % Thus
  \begin{align*}
    \Gamma(\beta)
    &=\lim_{n\to\infty}\frac{n!n^\beta}{\beta(\beta+1)\cdots (\beta+n)}\\
    &=\lim_{n\to\infty}\frac{(n-1)!n^{\beta-1}}{\beta(\beta+1)\cdots (\beta+n-2)}.
  \end{align*}
  The limit implies that there exists a constant $c_3$ such that
  \[ c_3\leq \frac{(n-1)!n^{\beta-1}}{\beta(\beta+1)\cdots (\beta+n-2)}\]
  or otherwise said
  \[ n^{\beta-1}\geq c_3\frac{\beta(\beta+1)\cdots (\beta+n-2)}{(n-1)!}.\]

  Since
  \[\sum_{n\geq1}\frac{\beta(\beta+1)\cdot (\beta+n-2)}{(n-1)!}z^n
    =\frac{z}{(1-z)^\beta},\quad\abs{z}<1,\]
  we obtain
  \begin{align*}
    &\sum_{k\geq1}k^{\beta-1}e^{-kr}(1-\cos(\theta-kt))\\
    &\quad\geq c_3\left(\sum_{k\geq1}\frac{\beta(\beta+1)\cdot (\beta+k-2)}{(k-1)!}e^{-kr}
      -\Re\left(\sum_{k\geq1}\frac{\beta(\beta+1)\cdot (\beta+k-2)}{(k-1)!}e^{-kr+i(\theta-kt)}\right)\right)\\
    &\quad=c_3\left(\frac{e^{-r}}{(1-e^{-r})^\beta}
      -\Re\left(\frac{e^{-r-it+i\theta}}{(1-e^{-r-it})^\beta}\right)\right)\\
    &\quad\geq c_3\left(\frac{e^{-r}}{(1-e^{-r})^\beta}
    -\frac{e^{-r}}{\left|1-e^{-r-it}\right|^\beta}\right).
  \end{align*}

  For the rest we follow the proof of Lemma 2.6 in Li and
  Chen~\cite{li_chen2018:r_th_root}. Starting with the second denominator, we obtain
  \begin{align*}
    \abs{1 - e^{-r - it}}^{\beta} &= (1 - 2e^{-r}\cos t + e^{-2r})^{\beta/2}\\
                                  &= ((1 - e^{-r})^{2} +2e^{-r}(1 - \cos t))^{\beta/2}\\
    &\geq (1-e^{-r})^{\beta}\Bigl(1 + \frac{2e^{-r}}{(1 - e^{-r})^{2}}(1 - \cos t_{n})\Bigr)^{\beta/2}
  \end{align*}
  for $t_{n}\leq \abs{t}\leq \pi$.

  Noting that
  \begin{align*}
    1 - \cos t_{n} = \frac12 t_{n}^{2} + \Oh(t_{n}^{4}) = \frac12 r^{2+6\beta/7}+ \Oh(r^{4 + 12\beta/7})
  \end{align*}
    and that $e^{-r} = 1 - r + \Oh(r^{2})$, we find
  \begin{align*}
    \frac{2e^{r}}{(1-e^{r})^{2}}(1 - \cos t_{n}) = \frac{2(1 + \Oh(r))(\frac12 r^{2 + 6\beta/7} + \Oh(r^{4+12\beta/7}))}{r^{2}(1 + \Oh(r))} = r^{6\beta/7}(1 + \Oh(r)).
  \end{align*}
  Plugging this into the first denominator implies
  \begin{align*}
    \abs{1 - e^{-r - it}}^{\beta} &\geq (1 - e^{-r})^{\beta}\bigl(1 + r^{6\beta/7}(1 + \Oh(r))\bigr)^{\beta/2}\\
    &= (1 - e^{-r})^{\beta}\Bigl(1 + \frac{\beta}{2}r^{6\beta/7} + \Oh(r^{6\beta/7 + 1})\Bigr).
  \end{align*}
  Thus
  \begin{align*}
    \frac{e^{-r}}{(1-e^{-r})^\beta}-\frac{e^{-r}}{\left|1-e^{-r-it}\right|^\beta}
    &\geq \frac{e^{-r}}{(1-e^{-r})^\beta}\left(1-\frac{1}{1+\frac{\beta}2r^{6\beta/7}+\Oh\left(r^{6\beta/7+1}\right)}\right)\\
    &=\frac{e^{-r}}{(1-e^{-r})^\beta}\left(\frac{\beta}2r^{6\beta/7}+\Oh\left(r^{6\beta/7+1}\right)\right)\\
    &=\frac{1+\Oh(r)}{r^\beta(1-\Oh(r))}\left(\frac{\beta}2r^{6\beta/7}+\Oh\left(r^{6\beta/7+1}\right)\right)\\
    &=\frac{\beta}2 r^{6\beta/7-\beta}+\Oh\bigl(r^{6\beta/7+1-\beta}\bigr)
  \end{align*}
  and finally
  \[
    \frac{e^{-r}}{(1-e^{-r})^\beta}-\frac{e^{-r}}{\left|1-e^{-r-it}\right|^\beta}
    \geq \frac14 r^{6\beta/7-\beta}
  \]
  for sufficiently small~$r$. So we consequently obtain
  \begin{equation*}
    \biggl(\frac{\abs{Q(e^{-(r+iy)}, e^{\rho+i\theta})}}{Q(e^{-r}, e^\rho)}\biggr)^2 \leq
    \exp\biggl(-\frac{2e^\rho(\beta-1)}{(1+e^\rho)^{2}}c_3\Bigl(\frac14 r^{6\beta/7-\beta}\Bigr)\biggr),
  \end{equation*}
  completing the proof.
\end{proof}

This estimate suffices for calculating an asymptotic for $q(n)$. For an
asymptotic formula for $q(n,m)$, however, we also need the following estimate if
$\abs{t}$ is small and $\abs{\theta}$ is large.

\begin{lem}\label{lem:integrant_estimate_for_small_t}
  Let $\delta>0$ and let $r$, $t$, $\rho$ a $\theta$ be reals such that
  $\abs{t}\leq r^{1+3\beta/7}$ and $r^{3\beta/7}<\abs{\theta}\leq \pi$. Then we have
  \[
    \frac{\abs{Q(e^{-(r+it)},e^{\rho+i\theta})}}{Q(e^{-r},e^\rho)}
    \leq
    \exp\biggl(-c_7\frac{e^\rho}{(1+e^\rho)^{2}}r^{-\beta/7}\biggr),
  \]
  where $c_7$ depends only on $\beta$.
\end{lem}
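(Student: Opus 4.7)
I would begin by recycling the first half of the proof of Lemma~\ref{lem:integrant_estimate_for_large_t}, which makes no use of whether $|t|$ is large or small. This yields
\[
\biggl(\frac{\abs{Q(e^{-(r+it)},e^{\rho+i\theta})}}{Q(e^{-r},e^\rho)}\biggr)^{2}
\leq \exp\biggl(-\frac{2e^\rho(\beta-1)}{(1+e^\rho)^{2}}\sum_{k\geq1}k^{\beta-1}e^{-kr}\bigl(1-\cos(\theta-kt)\bigr)\biggr),
\]
using $(1+e^{\rho-kr})^{2}\leq (1+e^\rho)^{2}$ together with the lower bound $g(k)\geq(\beta-1)k^{\beta-1}$ from Lemma~\ref{lem:order_of_g}. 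In Lemma~\ref{lem:integrant_estimate_for_large_t} the next move was Gauss's product formula, which exploited that $|t|$ was not too small; here the situation is reversed, so a different strategy is needed.

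\textbf{Truncation.} The plan is to restrict the sum to $1\leq k\leq K$ with $K=\lfloor 1/(2r)\rfloor$ and use the smallness of $|t|$ to keep $\theta-kt$ close to $\theta$. Indeed, from the hypotheses $|t|\leq r^{1+3\beta/7}$ and $|\theta|>r^{3\beta/7}$,
\[
|kt|\leq K\,|t|\leq \tfrac12\, r^{3\beta/7}\leq \tfrac{|\theta|}{2},
\]
so $|\theta-kt|\geq |\theta|/2$ for every such $k$. Combining this with the elementary inequality $1-\cos x\geq 2x^{2}/\pi^{2}$ for $|x|\leq\pi$, and the trivial observation that $1-\cos x\geq 1$ when $\pi\leq|x|\leq 3\pi/2$ (which disposes of the case when $\theta-kt$ falls outside $[-\pi,\pi]$), one obtains a uniform lower bound of the form $1-\cos(\theta-kt)\geq C_{1}\theta^{2}\geq C_{1}r^{6\beta/7}$.

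\textbf{Conclusion.} For $k\leq K$ one has $e^{-kr}\geq e^{-1/2}$, and since $\beta>1$ the partial sum satisfies $\sum_{k=1}^{K}k^{\beta-1}\asymp K^{\beta}\asymp r^{-\beta}$ with implicit constants depending only on $\beta$. Combining these three estimates gives
\[
\sum_{k\geq1}k^{\beta-1}e^{-kr}\bigl(1-\cos(\theta-kt)\bigr)\;\geq\; C_{2}\, r^{6\beta/7}\cdot r^{-\beta}\;=\;C_{2}\, r^{-\beta/7},
\]
and taking square roots yields the claimed bound with a constant $c_{7}=c_{7}(\beta)$. The main technical point is the case distinction that secures $1-\cos(\theta-kt)\gtrsim\theta^{2}$ uniformly on the truncated range; once that is in place, the remainder is straightforward truncation and summation.
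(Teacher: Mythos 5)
Your proposal is correct and follows essentially the same route as the paper: reduce to the lower bound $\sum k^{\beta-1}e^{-kr}\bigl(1-\cos(\theta-kt)\bigr)$, truncate to $k\ll 1/(2r)$ so that $|kt|\leq\tfrac12 r^{3\beta/7}\leq|\theta|/2$, apply $1-\cos x\geq 2x^{2}/\pi^{2}$ together with $e^{-kr}\geq e^{-1/2}$, and use $\sum k^{\beta-1}\asymp r^{-\beta}$ to obtain the exponent $r^{-\beta/7}$. The only cosmetic differences are that the paper truncates to $1/(3r)\leq k\leq 1/(2r)$ while you keep the full range $1\leq k\leq\lfloor 1/(2r)\rfloor$, and that you spell out the case $|\theta-kt|>\pi$, which the paper leaves implicit.
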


\begin{proof}
  Following the same lines as at the beginning of Lemma
  \ref{lem:integrant_estimate_for_large_t} above we get that
  \begin{align*}
    \biggl(\frac{\abs{Q(e^{-(r+it)}, e^{\rho+i\theta})}}{Q(e^{-r},e^\rho)}\biggr)^2
    &\leq\exp\biggl(-\frac{2e^\rho}{(1+e^\rho)^2}\sum_{1/(3r)\leq k\leq 1/(2r)}k^{\beta-1}e^{-kr}(1-\cos(\theta-kt))\biggr)\\
    &\leq\exp\biggl(-\frac{2e^\rho}{(1+e^\rho)^2}\left(1-\cos(\frac12 r^{3\beta/7})\right)e^{-1/2}\sum_{1/(3r)\leq k\leq 1/(2r)}k^{\beta-1}\biggr).
  \end{align*}
  First for the cosine part we note the following inequality
  \[
    1-\cos t\geq\frac{2}{\pi^2}t^2\quad\text{for }\abs{t}\leq\pi.
  \]
  Secondly we use Euler's summation formula (\textit{cf.} Theorem 3.2
  in Apostol \cite{apostol1976:introduction_to_analytic}) to obtain
  \[\sum_{1/(3r)\leq k\leq 1/(2r)}k^{\beta-1}\ll r^{-\beta},\]
  thus proving the lemma.
\end{proof}

% Finally we need an estimate for completing the tails. This is standard, however,
% since it is a single line we present it here for completeness.

% \begin{lem}\label{lem:completing_tails}
%   Let $a,t>0$. Then
%   \[\int_{t}^{\infty}\exp\left(-\frac{a^2}2x^2\right)\dd{x}
%   =\frac{\exp\left(-\frac{a^2}2t^2\right)}{ta^2}\]
% \end{lem}

% \begin{proof}
%   We have
%   \begin{align*}
%     \int_{t}^{\infty}\exp\left(-\frac{a^2}2x^2\right)\dd{x}
%     \leq\int_{t}^{\infty}\frac{x}{t}\exp\left(-\frac{a^2}2x^2\right)\dd{x}
%     %&=\int_{t^2}^{\infty}\frac{1}{2t}\exp\left(-\frac{a^2}2u\right)\dd{u}\\
%     =\frac{\exp\left(-\frac{a^2}2t^2\right)}{ta^2}
%   \end{align*}
% \end{proof}

\section{The number of partitions of $n$ of length $m$}
\label{sec:q_n_m}

%Flajolet and Sedjewick p.613ff.

Recall that $q(n,m)$ denotes the number of
partitions of $n$ of the form
\[n=\lfloor a_1^\alpha\rfloor+\cdots+\lfloor a_m^\alpha\rfloor.\]
Furthermore recall that by twice applying Cauchy's integral formula together
with the definition of the functions $Q$ and $f$ we obtain
\begin{multline*}
  q(n,m)=[u^mz^n]Q(z,u)=\frac{1}{(2\pi i)^2}\int_{\abs{u}=e^{\rho}}\int_{\abs{z}=e^{-r}}
  \frac{Q(z,u)}{z^{n+1}u^{m+1}}\dd{z}\dd{u}\\
    =\frac{\exp\left(-m\rho+nr\right)}{(2\pi)^2}\int_{-\pi}^{\pi}\int_{-\pi}^{\pi}
    \exp(-mi\theta+int+f(r+it,\rho+i\theta))
    \dd{t}\dd{\theta}.
\end{multline*}
We split the area $0\leq t,\theta\leq \pi$ of the double integral up into three parts:
\begin{align*}
  (I)& & &\abs{\theta}\leq \theta_n=r^{3\beta/7} & \abs{t}&\leq t_n=r^{1+3\beta/7},\\
  (II)& & &\abs{\theta}> \theta_n=r^{3\beta/7} & \abs{t}&\leq t_n=r^{1+3\beta/7}\quad\text{and}\\
  (III)& & 0\leq&\abs{\theta}\leq \pi & \abs{t}&> t_n=r^{1+3\beta/7}.
\end{align*}

For $(II)$ and $(III)$ we use Lemma \ref{lem:integrant_estimate_for_small_t} and
Lemma \ref{lem:integrant_estimate_for_large_t}, respectively. Thus we obtain
\begin{multline*}
  \left(\iint_{(II)}+\iint_{(III)}\right)
  \exp(-mi\theta+int+f(r+it,\rho+i\theta))
  \dd{t}\dd{\theta}\\
  \ll\exp\left(-m\rho+nr+f(r,\rho)-\frac{c_8e^\rho}{(1+e^\rho)^{2}}r^{-\beta/7}\right),
\end{multline*}
where $c_8$ is a combination of the constants in the Lemmas
\ref{lem:integrant_estimate_for_large_t} and \ref{lem:integrant_estimate_for_small_t}.

Now we concentrate on the first part and start with the inner integral with
respect to $t$:
\[J=\int_{-t_n}^{t_n}\exp(int+f(r+it,\rho+i\theta))\dd{t}.\]

Expanding $f(r+it,\rho+i\theta)$ around $t=0$ yields (using Lemma \ref{lem:derivative_of_f})
\begin{multline*}
  f(r+it,\rho+i\theta)\\
  =f(r,\rho+i\theta)+itf'_{10}(r,\rho+i\theta)
-\frac{t^2}{2}f''_{20}(r,\rho+i\theta)
-i\frac{t^3}{6}f'''_{30}(r,\rho+i\theta)
+\Oh\left(t^4r^{-(\beta+4)}\right).
\end{multline*}

Since $f''_{20}(r,\rho)>0$ (see \eqref{eq:f_20>0}) we may set
\begin{gather}\label{eq:B}
  B^2=B(r,\rho)^2=f''_{20}(r,\rho).
\end{gather}
Then we carry out the change of variables $t=v/B$. Thus using the relation of
$n$ and $r$ (see Equation \ref{eq:n_asymptotic_in_rho}) above we obtain
\begin{multline*}
  J=\frac{\exp(f(r,\rho+i\theta))}{B}\\
    \int_{-Bt_n}^{Bt_n}
    \exp\left(i\mathcal{Y}v
    -\frac{v^2}{2B^2}f''_{20}(r,\rho+i\theta)
    -i\frac{v^3}{6B^3}f'''_{30}(r,\rho+i\theta)
    +\Oh\left(r^{\beta}v^4\right)\right)\dd{v},
\end{multline*}
where we have written
\[
  \mathcal{Y}=\mathcal{Y}(r,\rho,\theta)=\frac{f'_{10}(r,\rho+i\theta)-f'_{10}(r,\rho)}{B(r,\rho)}
\]
for short.

Now expanding $f''_{20}(r,\rho+i\theta)$ around
$\theta=0$ we obtain
\[\frac{1}{B^2}f''_{20}(r,\rho+i\theta)
  =1+i\theta\frac{1}{B^2}f'''_{21}(r,\rho)+\Oh(\theta^2).
\]
Plugging this in $J$ we get
\begin{gather*}
  J=\frac{\exp(f(r,\rho+i\theta))}{B}
    \int_{-Bt_n}^{Bt_n}
    \exp\left(i\mathcal{Y}v-\frac{v^2}{2}\right)\left(1+R_1\right)\dd{v},
\end{gather*}
where
\[R_1=
  -i\frac{v^2\theta}{2B^2}f'''_{21}(r,\rho)
  -i\frac{v^3}{6B^3}f'''_{30}(r,\rho+i\theta)
  +\Oh\left(\theta^2v^2+r^{\beta} v^4\right).
\]

Completing the tails we obtain
\begin{align*}
  J&=\frac{\exp(f(r,\rho+i\theta))}{B}
\int_{-\infty}^{\infty}
\exp\left(i\mathcal{Y}v-\frac{v^2}{2}\right)\left(1+R_1\right)\dd{v}\\
&\quad+\Oh\left(\frac1{B^2t_n}\exp\left(\Re f(r,\rho +i\theta)-\frac12 B^2 t_n^2\right)\right).
\end{align*}

We may interpret the integral together with the powers of $v$ appearing in
$(1+R_1)$ as a Fourier transform of $v^L\exp(v^2/2)$ with $L\geq0$ an integer.
Then by induction on $L$ we obtain that
\[\frac{1}{2\pi}\int_{-\infty}^{\infty}e^{\mathcal{Y}iv-v^2/2}v^L\dd{v}
  =\frac{e^{-\mathcal{Y}^2/2}}{\sqrt{2\pi}}\sum_{0\leq 2\ell\leq L}
  \binom{L}{2\ell}\frac{(2\ell)!}{2^\ell \ell!}(i\mathcal{Y})^{L-2\ell}.
\]
Thus
\[
  J=\frac{\exp(f(r,\rho+i\theta)-\frac{\mathcal{Y}^2}{2})}{\sqrt{2\pi}B}
  \left(1+R_2\right)+\Oh\left(R_3\right),
\]
where
\begin{align*}
  R_2&=\frac{i\theta}{2B^2}\left(\mathcal{Y}^2-1\right)
  f'''_{21}(r,\rho)
  +\frac{1}{6B^3}\left(\mathcal{Y}^3-3\mathcal{Y}\right)
  f'''_{30}(r,\rho)\\
  R_3&=\left(\theta^2\abs{\mathcal{Y}}^2+r^{\beta} \abs{\mathcal{Y}}^4\right)
  \exp\left(\Re f(r,\rho+i\theta)-\Re\mathcal{Y}^2/2\right)/B\\
  &\quad+\frac1{B^2t_n}\exp\left(\Re f(r,\rho +i\theta)-\frac12 B^2 t_n^2\right)
\end{align*}

Now we return to the outer integral:
\[
  q(n,m)=\frac{e^{-m\rho+nr}}{(2\pi)\sqrt{2\pi}B}\int_{-\theta_n}^{\theta_n}\exp(-im\theta+f(r,\rho+i\theta)-\frac{\mathcal{Y}^2}{2})
  \left(1+R_2\right)+\Oh\left(R_3\right)\dd{\theta}
\]

Expanding $f(r,\rho+i\theta)$ and $f'_{10}(r,\rho+i\theta)$ around
$\theta=0$ yields
\begin{align*}
  \exp\left(-im\theta+f(r,\rho+i\theta)-\frac{\mathcal{Y}^2}{2}\right)
  =\exp\left(f(r,\rho)-b^2\frac{\theta^2}{2}\right)\left(1+R_4\right),
\end{align*}
where
\begin{align}
  b^2&=\left(f''_{02}(r,\rho)
  -\frac{\left(f''_{11}(r,\rho)\right)^2}
  {f''_{20}(r,\rho)}\right)\quad\text{and}\label{eq:b}\\
  R_4&=-i\left(f'''_{03}(r,\rho)
  -3\frac{f''_{11}(r,\rho)
    f'''_{12}(r,\rho)}
    {f''_{20}(r,\rho)}\right)\frac{\theta^3}6
  +\Oh\left(r^{-\beta}\theta^4\right).\notag
\end{align}

Putting everything together we get
\begin{equation}\label{eq:q_n_m}
  q(n,m)=\frac{e^{-m\rho+nr+f(r,\rho)}}{2\pi Bb}\left(1+\Oh\left(r^{2\beta/7}\right)\right),
\end{equation}
where $B$ and $b$ are as in \eqref{eq:B} and \eqref{eq:b}, respectively.

\section{The number of partitions of $n$}
\label{sec:q_n}

Recall that by $q(n)$ we denote the number of representations of $n$ of the form
\[n=\lfloor a_1^\alpha\rfloor+\cdots+\lfloor a_m^\alpha\rfloor\] with integers
$1\leq a_1<a_2<\cdots<a_s$ and $m\geq1$. Thus an application of Cauchy's
integral formula yields for $r>0$
\begin{align*}
  q(n)=[z^n]Q(z,1)&=\frac{1}{2\pi i}\oint_{\abs{z}=e^{-r}}\frac{Q(z,1)}{z^{n+1}}\dd{z}\\
  &=\frac{e^{nr}}{2\pi}\int_{-\pi}^{\pi}\exp\left(int+f(r+it,0)\right)\dd{t}.
\end{align*}

Since there is no $m$ in this case we suppose that $\rho=0$ and therefore choose
$r=r(0)$, where $r$ is the implicit function defined in Lemma \ref{lem:unique_r}.
Similar to above we split the last integral into two parts according to the size
of $t_n=r^{1+3\beta/7}$. In particular,
\[
  q(n)=I_1+I_2,
\]
where
\begin{align*}
  I_1&=\frac{e^{nr}}{2\pi}\int_{\abs{t}\leq t_n}\exp\left(int+f(r+it,0)\right)\dd{t}\quad\text{and}\\
  I_2&=\frac{e^{nr}}{2\pi}\int_{t_n<\abs{t}\leq\pi}\exp\left(int+f(r+it,0)\right)\dd{t}.
\end{align*}

We start our considerations with the second integral providing us with the
error term. By the symmetry of the integrand we have
\begin{align*}
  \abs{I_2}&\leq \frac{e^{nr+f(r,0)}}{2\pi}\int_{t_n}^{\pi}\frac{\abs{Q(e^{-r-it},1)}}{Q(e^{-r},1)}\dd{t}
\end{align*}
and an application of Lemma \ref{lem:integrant_estimate_for_large_t} yields
\begin{align*}
  \abs{I_2}&\ll \exp\left(nr+f(r,0)-c_4r^{-\beta/7}\right).
\end{align*}

Now we turn our attention to the main part $I_1$. We expand $f(r+it,0)$ around
$t=0$, \textit{i.e.}
\[
  f(r+it,0)=
  f(r,0)+itf'_{10}(r,0)
  -\frac{t^2}{2}f''_{20}(r,0)
  +\Oh\left(t^3\sup_{0\leq\abs{t_0}\leq t_n}f'''_{30}(r+it_0,0)\right).
\]
% Since for $k\geq1$ the function
% $r\mapsto (e^{rk}+1)^{-1}$ is decreasing, we may choose $r$ to be the unique solution 
% of
% \[n=-f'_{10}(r,0)=\sum_{k\geq1}\frac{kg(k)}{e^{rk}+1}.\]
% Clearly $r\to 0^+$ for $n\to+\infty$. Furthermore an application of Lemma
% \ref{lem:derivative_of_f} yields
% \begin{gather}\label{eq:n_asymptotic}
%   n=-f'_{10}(r,0)=-\sum_{\nu=1}^{\lceil\beta-1\rceil}\binom{\beta}{\nu}\Li_{\beta-\nu+2}(-1)\Gamma(\beta-\nu+2)r^{-\beta+\nu-2}+\Oh(r^{-3/2}).
% \end{gather}

By our choice of $\rho$ an application of Lemma \ref{lem:derivative_of_f} yields
\[
  n=-f'_{10}(r(0),0)=\sum_{\nu=1}^{\lceil\beta-1\rceil}\binom{\beta}{\nu}\Li_{\beta-\nu+2}(-1)\Gamma(\beta-\nu+2)r^{-\beta+\nu-2}+\Oh(r^{-3/2}).
\]
Furthermore we obtain for $B$ as in \eqref{eq:B} that
\[B^2=B(r(0),0)^2=f''_{20}(r,0)=-\sum_{\nu=1}^{\lceil\beta-1\rceil}\binom{\beta}{\nu}\Li_{\beta-\nu+3}(-1)\Gamma(\beta-\nu+3)r^{-\beta+\nu-3}+\Oh(r^{-5/2}).\]

Finally an application of Lemma \ref{lem:derivative_of_f} yields for the third
derivative (using that $t_n=r^{1+3\beta/7}$)
\[\sup_{0\leq\abs{t_0}\leq t_n}f'''_{30}(r+it_0,0)\ll r^{-(\beta+4)}.\]

Plugging everything in $I_1$ we obtain
\[I_1=\frac{e^{nr+f(r,0)}}{2\pi}\left(\int_{-t_n}^{t_n}\exp\left(-\frac{B^2}2t^2\right)\dd{t}\right)\left(1+\Oh\left(r^{2\beta/7}\right)\right).\]

Completing the tails and putting everything
together yields
\begin{equation}\label{eq:q_n}
  q(n)=\frac{e^{nr(0)+f(r(0),0)}}{\sqrt{2\pi}B}\left(1+\Oh\left(r(0)^{2\beta/7}\right)\right),
\end{equation}
where $r(\rho)$ is the implicit function defined in Lemma \ref{lem:unique_r} and
$B$ is defined in \eqref{eq:B}.

\section{The proof of the local limit theorem}
\label{sec:proof_of_llt}

%\todo[inline]{Continue here ...}

% For the proof of Theorem \ref{thm:main} we need to show that
% \begin{gather}\label{eq:llt}
%   \sup_{x\in\R}\left|\mathbb{P}(\varpi_n=\lfloor
%   \mu_n+x\sigma_n\rfloor)-\frac1{\sqrt{2\pi}}e^{-x^2/2}\right|\leq
%   \varepsilon_n.
% \end{gather}

We assume that $x$ is chosen such that $m=\mu_n+x\sigma_n$ is an integer. Then
combining \eqref{eq:q_n_m} and \eqref{eq:q_n} we have
\[
  \mathbb{P}(\varpi_n=m)=\frac{q(n,m)}{q(n)}=L_n(\rho)e^{H_n(\rho)}\left(1+\Oh\left(n^{-2\beta/(7\beta+7)}\right)\right),
\]
where
\begin{align}
  L_n(\rho)&=\frac{B(r(0),0)}{\sqrt{2\pi}B(r(\rho),\rho)b(r(\rho),\rho)}\label{eq:L_n}
  \intertext{and}
  H_n(\rho)&=-m\rho+nr(\rho)+f(r(\rho),\rho)-nr(0)-f(r(0),0).\label{eq:H_n}
\end{align}

Before we start our considerations we need some information on the derivatives
of $r(\rho)$, which will appear in the sequel. Hence, we note that
$n=-f'_{10}(r(\rho),\rho)$. Thus by implicit differentiation we obtain
\begin{align*}
  r'(\rho)&=-\frac{f''_{11}(r,\rho)}{f''_{20}(r,\rho)},\\
  r''(\rho)&=\frac{-f'''_{30}(r,\rho)f''_{11}(r,\rho)^2+2f'''_{21}(r,\rho)f''_{11}(r,\rho)f''_{20}(r,\rho)-f'''_{12}(r,\rho)f''_{20}(r,\rho)^2}{f''_{20}(r,\rho)^3}
\end{align*}
and
\begin{align*}
  r'''(\rho)&=-f^{(4)}_{13}(r,\rho) f''_{20}(r,\rho)^{-1} \\
  &\quad + \left(3 f^{(4)}_{22}(r,\rho) f''_{11}(r,\rho)+3 f'''_{12}(r,\rho)
    f'''_{21}(r,\rho)\right) f''_{20}(r,\rho)^{-2} \\
  &\quad -\left(3f^{(4)}_{31}(r,\rho) f''_{11}(r,\rho)^2+6
    f'''_{21}(r,\rho)^2 f''_{11}(r,\rho) 
    +3f'''_{12}(r,\rho) f'''_{30}(r,\rho) f''_{11}(r,\rho)\right) f''_{20}(r,\rho)^{-3} \\
  &\quad +\left(f^{(4)}_{40}(r,\rho) f''_{11}(r,\rho)^3+9 f'''_{21}(r,\rho)
    f'''_{30}(r,\rho) f''_{11}(r,\rho)^2\right) f''_{20}(r,\rho)^{-4} \\
&\quad -3 f''_{11}(r,\rho)^3 f'''_{30}(r,\rho)^2 f''_{20}(r,\rho)^{-5}.
\end{align*}
Using the estimates for the partial derivatives of $f$ in Lemma
\ref{lem:derivative_of_f} we get for integers $p,q\geq0$ that
\[
  f^{(p+q)}_{pq}(r,\rho)\asymp r^{-(\beta+p)}\asymp n^{\frac{\beta+p}{\beta+1}}.
\]
Plugging this into our derivatives of $r$ we get that $r'$, $r''$ and $r'''$ are
all a $\Oh\left(n^{-1/(\beta+1)}\right)$.

Now we start our considerations with the exponent $H_n$. To this end we write
$T(\rho)=f(r(\rho),\rho)$ for short. Recall that $n=-f'_{10}(r(\rho),\rho)$ and
$m=f'_{01}(r(\rho),\rho)=S(\rho)$ as defined in \eqref{eq:S}. Thus
\begin{align*}
  T'(\rho)&=-n r'(\rho)+S(\rho),\\
  T''(\rho)&=f''_{20}(r(\rho),\rho) r'(\rho)^2+f''_{11}(r(\rho),\rho) r'(\rho)
  -n r''(\rho)+S'(\rho)
\end{align*}
and
\begin{align*}
  T'''(\rho)&=f'''_{30}(r(\rho),\rho)r'(\rho)^3+2f'''_{21}(r(\rho),\rho)r'(\rho)^2
    +3f''_{20}(r(\rho),\rho)r'(\rho)r''(\rho)\\
  &\quad+f'''_{12}(r(\rho),\rho)r'(\rho)+2f''_{11}(r(\rho),\rho)r''(\rho)
    +n r'''(\rho)+S''(\rho).
\end{align*}
Then we have
\begin{align*}
  r(\rho)-r(0)&=r'(0)\rho+r''(0)\frac{\rho^2}{2}+\Oh\left(n^{-1/(\beta+1)}\rho^3\right)\\
  T(\rho)-T(0)&=T'(0)\rho+T''(0)\frac{\rho^2}{2}+\Oh\left(n^{\beta/(\beta+1)}
  \rho^3\right)\\
  S(\rho)&=S(0)+S'(0)\rho+\Oh\left(n^{\beta/(\beta+1)}
  \rho^2\right)
\end{align*}
Plugging this into \eqref{eq:H_n} yields
\begin{align*}
  H_n(\rho)
  &=-(S(0)+S'(0)\rho)\rho
    +n\left(r'(0)\rho+r''(0)\frac{\rho^2}{2}\right)
    +T'(0)\rho+T''(0)\frac{\rho^2}{2}+\Oh\left(n^{\beta/(\beta+1)} \rho^3\right)\\
  &=\left(-S'(0)+f''_{20}(r(0),0)r'(0)^2+f''_{11}(r(0),0)r'(0)\right)\frac{\rho^2}{2}
    +\Oh\left(n^{\beta/(\beta+1)} \rho^3\right)\\
  &=-S'(0)\frac{\rho^2}{2}
    +\Oh\left(n^{\beta/(\beta+1)} \rho^3\right).
\end{align*}

Since $S(0)=\mu_n$ and $S'(0)=\sigma_n^2$, we get from our choice of $x$ that
\[\frac{x}{\sigma_n}=\frac{m-\mu_n}{\sigma_n^2}=\frac{S(\rho)-S(0)}{S'(0)}.\]
Using Lagrange inversion (\textit{cf.} Chapter A.6 of Flajolet and Sedgewick
\cite{flajolet_sedgewick2009:analytic_combinatorics}) finally yields
\[
  \rho=\sum_{j\geq1}\rho_j\left(\frac{x}{\sigma_n}\right)^j
\]
with
\[
  \rho_j=\frac{1}{j}[w^{j-1}]\left(\frac{S(w)-S(0)}{S'(0)w}\right)^{-j}.
\]

Noting that $\rho_1=1$ we may replace $\rho$ by $x/\sigma_n$ and get that
\[
  H_n(\rho)=-\frac{x^2}{2}+\Oh\left(n^{\beta/(\beta+1)}\left(\frac{\lvert x\rvert}{\sigma_n}\right)^3\right).
\]

By a similar calculation we get for \eqref{eq:L_n} that
\[
  L_n(\rho)=\frac{1}{\sqrt{2\pi}\sigma_n}\left(1+\Oh\left(\frac{\abs{x}}{\sigma_n}\right)\right).
\]

Since $\sigma_n^2\sim n^{\beta/(\beta+1)}$ by Theorem \ref{thm:clt}, putting the
last two estimates together establishes Theorem \ref{thm:main}.

\section*{Acknowledgment}
The first author is supported by the Austrian Science Fund (FWF), project
W\,1230. The second author is supported by project ANR-18-CE40-0018 funded by
the French National Research Agency. The third author is supported by the
Austrian Science Fund (FWF), project F\,5510-N26 within the Special Research
Area “Quasi-Monte Carlo Methods: Theory and Applications” and project I\,4406-N.

Major Parts of the present paper were established when the three authors ware
mutually visiting each other. These visits were not only at their home
institutions in Graz and Nancy, but also at the Department of Mathematics at the
University of Klagenfurt, Austria, and the Centre International de Rencontres
Mathématiques in Marseille, France. The authors thank all the institutions for
their hospitality.

%\bibliography{literatur}

% \bib, bibdiv, biblist are defined by the amsrefs package.
\begin{bibdiv}
  \begin{biblist}
  
  \bib{andrews1976:theory_partitions}{book}{
        author={Andrews, George~E.},
         title={The theory of partitions},
     publisher={Addison-Wesley Publishing Co., Reading, Mass.-London-Amsterdam},
          date={1976},
          note={Encyclopedia of Mathematics and its Applications, Vol. 2},
        review={\MR{MR0557013 (58 \#27738)}},
  }
  
  \bib{apostol1976:introduction_to_analytic}{book}{
        author={Apostol, Tom~M.},
         title={Introduction to analytic number theory},
        series={Undergraduate Texts in Mathematics},
     publisher={Springer-Verlag, New York-Heidelberg},
          date={1976},
        review={\MR{0434929}},
  }
  
  \bib{balasubramanian_luca2011:number_factorizations_integer}{article}{
        author={Balasubramanian, Ramachandran},
        author={Luca, Florian},
         title={On the number of factorizations of an integer},
          date={2011},
          ISSN={1553-1732},
       journal={Integers},
        volume={11},
         pages={A12, 5},
           url={https://doi.org/10.1515/integ.2011.012},
        review={\MR{2798647}},
  }
  
  \bib{chen_li2015:square_root_partition}{article}{
        author={Chen, Yong-Gao},
        author={Li, Ya-Li},
         title={On the square-root partition function},
          date={2015},
          ISSN={1631-073X},
       journal={C. R. Math. Acad. Sci. Paris},
        volume={353},
        number={4},
         pages={287\ndash 290},
           url={https://doi.org/10.1016/j.crma.2015.01.013},
        review={\MR{3319122}},
  }
  
  \bib{Chern2021:square_root_partitions}{article}{
        author={Chern, Shane},
         title={Note on square-root partitions into distinct parts},
          date={2021},
          ISSN={1382-4090},
       journal={Ramanujan J.},
        volume={54},
        number={2},
         pages={449\ndash 461},
           url={https://doi.org/10.1007/s11139-019-00191-8},
        review={\MR{4204766}},
  }
  
  \bib{debruyne_tenenbaum2020:saddle_point_method_for_partitions}{article}{
        author={Debruyne, Gregory},
        author={Tenenbaum, G\'{e}rald},
         title={The saddle-point method for general partition functions},
          date={2020},
          ISSN={0019-3577},
       journal={Indag. Math. (N.S.)},
        volume={31},
        number={4},
         pages={728\ndash 738},
           url={https://doi.org/10.1016/j.indag.2020.06.010},
        review={\MR{4126763}},
  }
  
  \bib{erdos_lehner1941:distribution_of_summands_in_partitions}{article}{
        author={Erd\H{o}s, Paul},
        author={Lehner, Joseph},
         title={The distribution of the number of summands in the partitions of a
    positive integer},
          date={1941},
          ISSN={0012-7094},
       journal={Duke Math. J.},
        volume={8},
         pages={335\ndash 345},
           url={http://projecteuclid.org/euclid.dmj/1077492649},
        review={\MR{4841}},
  }
  
  \bib{finch2003:mathematical_constants}{book}{
        author={Finch, Steven~R.},
         title={Mathematical constants},
        series={Encyclopedia of Mathematics and its Applications},
     publisher={Cambridge University Press, Cambridge},
          date={2003},
        volume={94},
          ISBN={0-521-81805-2},
        review={\MR{2003519}},
  }
  
  \bib{flajolet1999:singularity_analysis_and}{article}{
        author={Flajolet, Philippe},
         title={Singularity analysis and asymptotics of {B}ernoulli sums},
          date={1999},
          ISSN={0304-3975},
       journal={Theoret. Comput. Sci.},
        volume={215},
        number={1-2},
         pages={371\ndash 381},
           url={https://doi.org/10.1016/S0304-3975(98)00220-5},
        review={\MR{1678788}},
  }
  
  \bib{flajolet_gourdon_dumas1995:mellin_transforms_and}{incollection}{
        author={Flajolet, Philippe},
        author={Gourdon, Xavier},
        author={Dumas, Philippe},
         title={Mellin transforms and asymptotics: harmonic sums},
          date={1995},
        volume={144},
         pages={3\ndash 58},
           url={https://doi.org/10.1016/0304-3975(95)00002-E},
          note={Special volume on mathematical analysis of algorithms},
        review={\MR{1337752}},
  }
  
  \bib{flajolet_sedgewick2009:analytic_combinatorics}{book}{
        author={Flajolet, Philippe},
        author={Sedgewick, Robert},
         title={Analytic combinatorics},
     publisher={Cambridge University Press, Cambridge},
          date={2009},
          ISBN={978-0-521-89806-5},
           url={http://dx.doi.org/10.1017/CBO9780511801655},
        review={\MR{2483235 (2010h:05005)}},
  }
  
  \bib{gafni_2016:power_partitions}{article}{
        author={Gafni, Ayla},
         title={Power partitions},
          date={2016},
          ISSN={0022-314X},
       journal={J. Number Theory},
        volume={163},
         pages={19\ndash 42},
           url={https://doi.org/10.1016/j.jnt.2015.11.004},
        review={\MR{3459558}},
  }
  
  \bib{goh_schmutz1995:distinct_part_sizes_in_partitions}{article}{
        author={Goh, William M.~Y.},
        author={Schmutz, Eric},
         title={The number of distinct part sizes in a random integer partition},
          date={1995},
          ISSN={0097-3165},
       journal={J. Combin. Theory Ser. A},
        volume={69},
        number={1},
         pages={149\ndash 158},
           url={https://doi.org/10.1016/0097-3165(95)90111-6},
        review={\MR{1309156}},
  }
  
  \bib{Granovsky_Stark2012:meinardus_multiple_singularities}{article}{
        author={Granovsky, Boris~L.},
        author={Stark, Dudley},
         title={A {M}einardus theorem with multiple singularities},
          date={2012},
          ISSN={0010-3616},
       journal={Comm. Math. Phys.},
        volume={314},
        number={2},
         pages={329\ndash 350},
           url={https://doi.org/10.1007/s00220-012-1526-8},
        review={\MR{2958955}},
  }
  
  \bib{hardy_ramanujan1918:asymptotic_formulae_in}{article}{
        author={Hardy, G.~H.},
        author={Ramanujan, S.},
         title={Asymptotic formulae in combinatory analysis},
          date={1918},
       journal={Proc. London Math. Soc.},
        volume={17},
         pages={75\ndash 115},
  }
  
  \bib{hwang2001:limit_theorems_number}{article}{
        author={Hwang, Hsien-Kuei},
         title={Limit theorems for the number of summands in integer partitions},
          date={2001},
          ISSN={0097-3165},
       journal={J. Combin. Theory Ser. A},
        volume={96},
        number={1},
         pages={89\ndash 126},
           url={https://doi.org/10.1006/jcta.2000.3170},
        review={\MR{1855788}},
  }
  
  \bib{ingham1941:tauberian_theorem_partitions}{article}{
        author={Ingham, A.~E.},
         title={A {T}auberian theorem for partitions},
          date={1941},
          ISSN={0003-486X},
       journal={Ann. of Math. (2)},
        volume={42},
         pages={1075\ndash 1090},
        review={\MR{MR0005522 (3,166a)}},
  }
  
  \bib{li_chen2016:r_th_root}{article}{
        author={Li, Ya-Li},
        author={Chen, Yong-Gao},
         title={On the {$r$}-th root partition function},
          date={2016},
          ISSN={1027-5487},
       journal={Taiwanese J. Math.},
        volume={20},
        number={3},
         pages={545\ndash 551},
           url={https://doi.org/10.11650/tjm.20.2016.6812},
        review={\MR{3511994}},
  }
  
  \bib{li_chen2018:r_th_root}{article}{
        author={Li, Ya-Li},
        author={Chen, Yong-Gao},
         title={On the {$r$}-th root partition function, {II}},
          date={2018},
          ISSN={0022-314X},
       journal={J. Number Theory},
        volume={188},
         pages={392\ndash 409},
           url={https://doi.org/10.1016/j.jnt.2018.01.018},
        review={\MR{3778641}},
  }
  
  \bib{li_wu2021:k_th_root}{article}{
        author={Li, Ya-Li},
        author={Wu, Jie},
         title={On the {$k$}th root partition function},
          date={2021},
          ISSN={1793-0421},
       journal={Int. J. Number Theory},
        volume={17},
        number={9},
         pages={2071\ndash 2085},
           url={https://doi.org/10.1142/S1793042121500779},
        review={\MR{4321374}},
  }
  
  \bib{lipnik_madritsch_tichy:central_limit_theorem}{article}{
        author={Lipnik, G.},
        author={Madritsch, M.},
        author={Tichy, R.},
         title={A central limit theorem for integer partitions into small
    powers},
          date={to appear},
       journal={Monatsh. Math.},
  }
  
  \bib{luca_ralaivaosaona2016:explicit_bound_number}{article}{
        author={Luca, Florian},
        author={Ralaivaosaona, Dimbinaina},
         title={An explicit bound for the number of partitions into roots},
          date={2016},
          ISSN={0022-314X},
       journal={J. Number Theory},
        volume={169},
         pages={250\ndash 264},
           url={https://doi.org/10.1016/j.jnt.2016.05.017},
        review={\MR{3531239}},
  }
  
  \bib{madritsch_wagner2010:central_limit_theorem}{article}{
        author={Madritsch, Manfred},
        author={Wagner, Stephan},
         title={A central limit theorem for integer partitions},
          date={2010},
          ISSN={0026-9255},
       journal={Monatsh. Math.},
        volume={161},
        number={1},
         pages={85\ndash 114},
           url={http://dx.doi.org/10.1007/s00605-009-0126-y},
        review={\MR{2670233}},
  }
  
  \bib{meinardus1954:meinardus_scheme}{article}{
        author={Meinardus, G\"{u}nter},
         title={Asymptotische {A}ussagen \"{u}ber {P}artitionen},
          date={1954},
          ISSN={0025-5874},
       journal={Math. Z.},
        volume={59},
         pages={388\ndash 398},
           url={https://doi.org/10.1007/BF01180268},
        review={\MR{62781}},
  }
  
  \bib{rademacher1937:partition_function_p}{article}{
        author={Rademacher, Hans},
         title={On the {P}artition {F}unction p(n)},
          date={1937},
          ISSN={0024-6115},
       journal={Proc. London Math. Soc.},
        volume={S2-43},
        number={4},
         pages={241},
           url={http://dx.doi.org/10.1112/plms/s2-43.4.241},
        review={\MR{1575213}},
  }
  
  \bib{ralaivaosaona2012:random_prime_partitions}{article}{
        author={Ralaivaosaona, Dimbinaina},
         title={On the number of summands in a random prime partition},
          date={2012},
          ISSN={0026-9255},
       journal={Monatsh. Math.},
        volume={166},
        number={3-4},
         pages={505\ndash 524},
           url={https://doi.org/10.1007/s00605-011-0337-x},
        review={\MR{2925152}},
  }
  
  \bib{remmert1998:classical_topics_in}{book}{
        author={Remmert, Reinhold},
         title={Classical topics in complex function theory},
        series={Graduate Texts in Mathematics},
     publisher={Springer-Verlag, New York},
          date={1998},
        volume={172},
          ISBN={0-387-98221-3},
           url={https://doi.org/10.1007/978-1-4757-2956-6},
          note={Translated from the German by Leslie Kay},
        review={\MR{1483074}},
  }
  
  \bib{roth_szekeres1954:some_asymptotic_formulae}{article}{
        author={Roth, K.~F.},
        author={Szekeres, G.},
         title={Some asymptotic formulae in the theory of partitions},
          date={1954},
          ISSN={0033-5606},
       journal={Quart. J. Math., Oxford Ser. (2)},
        volume={5},
         pages={241\ndash 259},
        review={\MR{MR0067913 (16,797b)}},
  }
  
  \bib{schmutz1994:part_sizes_of_partitions}{article}{
        author={Schmutz, Eric},
         title={Part sizes of random integer partitions},
          date={1994},
          ISSN={0019-5588},
       journal={Indian J. Pure Appl. Math.},
        volume={25},
        number={6},
         pages={567\ndash 575},
        review={\MR{1285219}},
  }
  
  \bib{tenenbaum_wu_li2019:power_partitions_and}{article}{
        author={Tenenbaum, G\'{e}rald},
        author={Wu, Jie},
        author={Li, Ya-Li},
         title={Power partitions and saddle-point method},
          date={2019},
          ISSN={0022-314X},
       journal={J. Number Theory},
        volume={204},
         pages={435\ndash 445},
           url={https://doi.org/10.1016/j.jnt.2019.04.013},
        review={\MR{3991428}},
  }
  
  \bib{wilf1983:problems_in_combinatorial_asymptotics}{article}{
        author={Wilf, Herbert~S.},
         title={Three problems in combinatorial asymptotics},
          date={1983},
          ISSN={0097-3165},
       journal={J. Combin. Theory Ser. A},
        volume={35},
        number={2},
         pages={199\ndash 207},
           url={https://doi.org/10.1016/0097-3165(83)90007-9},
        review={\MR{712105}},
  }
  
  \end{biblist}
  \end{bibdiv}

\end{document}